\newcommand\pref[1]{\prettyref{#1}}
\newtheorem{pro}{Proposition}[section]
\newtheorem{lem}[pro]{Lemma}
\newtheorem{thm}[pro]{Theorem}
\newtheorem{con}[pro]{Conjecture}
\newtheorem{ques}[pro]{Question}
\newtheorem{predef}[pro]{Definition}
\newtheorem{prerem}[pro]{Remark}
\newenvironment{rem}{\begin{prerem}\rm}{\hfill $\blacktriangle$\end{prerem}}
\newcommand{\bq}{\begin{equation}}
\newcommand{\eq}{\end{equation}}
\begin{document}
\title{On a question of Erd\H{o}s and Faudree on the size Ramsey numbers}
\author{\small  Ramin Javadi$^{\textrm{a},1}$,  Gholamreza Omidi$^{\textrm{a},\textrm{b},2}$ \\
	\small  $^{\textrm{a}}$Department of Mathematical Sciences,
	Isfahan University
	of Technology,\\ \small P.O.Box: 84156-83111, Isfahan, Iran\\
	\small  $^{\textrm{b}}$School of Mathematics, Institute for
	Research
	in Fundamental Sciences (IPM),\\
	\small  P.O.Box: 19395-5746, Tehran, Iran\\
	\small Emails: \texttt{\href{mailto:rjavadi@cc.iut.ac.ir}{rjavadi@cc.iut.ac.ir},
		\href{mailto:romidi@cc.iut.ac.ir}{romidi@cc.iut.ac.ir}}
	}
%\author{
%	Ramin Javadi \thanks{Corresponding author} \thanks{Department of Mathematical Sciences,
%		Isfahan University of Technology,
%		P.O. Box: 84156-83111, Isfahan, Iran. Email Address: \href{mailto:rjavadi@cc.iut.ac.ir}{rjavadi@cc.iut.ac.ir}.}  \and
%	Afsaneh Khodadadpour\thanks{Department of Mathematical Sciences, Isfahan University of Technology,
%		P.O. Box: 84156-83111, Isfahan, Iran. Email Address: \href{mailto:a.khodadadpour@math.iut.ac.ir}{a.khodadadpour@math.iut.ac.ir}.}  \and
%	Gholamreza Omidi\thanks{Department of Mathematical Sciences, Isfahan University of Technology, P.O. Box: 84156-83111, Isfahan, Iran. Email Address: \href{mailto:romidi@cc.iut.ac.ir}{romidi@cc.iut.ac.ir}.}
%}
\date{}
\maketitle
\footnotetext[1] {\tt Corresponding author.}

\footnotetext[2] {\tt This research is partially
	carried out in the IPM-Isfahan Branch and in part supported
	by a grant from IPM (No. 95050217).}
\begin{abstract}
For given simple graphs $G_1$ and $G_2$, the size Ramsey number $\hat{R}(G_1,G_2)$ is the smallest positive integer $m$, where there exists a graph $G$ with $m$ edges such that in any edge coloring of $G$ with two colors red and blue, there is either a red copy of $G_1$ or a blue copy of $G_2$. In 1981, Erd\H{o}s and Faudree investigated the size Ramsey number $\hat{R}(K_n,tK_2)$, where $K_n$ is a complete graph on $n$ vertices and $tK_2$ is a matching of size $t$. They obtained the value of $\hat{R}(K_n,tK_2)$ when $n\geq 4t-1$ as well as for $t=2$ and asked for the behavior of these numbers when $ t $ is much larger than $ n $. In this regard, they posed the following interesting question: For every positive integer $n$, is it true that 
$$\lim_{t\to \infty} \frac{\hat{R}(K_n,tK_2)}{t\, \hat{R}(K_n,K_2)}=  \min\left\{\dfrac{\binom{n+2t-2}{2}}{t\binom{n}{2}}\mid t\in \mathbb{N}\right\}?$$
In this paper, we obtain the exact value of $ \hat{R}(K_n,tK_2) $ for every positive integers $ n,t $ and as a byproduct, we give an affirmative answer to the question of Erd\H{o}s and Faudree.\\

\textbf{Keywords:} Ramsey numbers, size Ramsey numbers, stripes.\\
\textbf{AMS subject classification:} 05C55, 05D10.
\end{abstract}

\section{Introduction}\label{sec:intro}

In this paper, all graphs are finite, undirected and 
simple. We also follow \cite{Boundy} for the terminology
and notation not defined here. Let $G,G_1,\ldots,G_n$ be given graphs. We write $G\rightarrow (G_1,\ldots,G_n)$, if in every edge coloring of $G$ with $n$ colors,  there is a monochromatic copy of $G_i$ of $i$-th color for some $1\leq i\leq n$. The minimum number of ``vertices'' of a graph $G$ such that $G\rightarrow (G_1,\ldots,G_n)$ is called the \textit{Ramsey number} of $G_1,\ldots,G_n$ and is denoted by $R(G_1,\ldots,G_n)$. The Ramsey numbers have been studied widely in the literature. Another important parameter is the \textit{size Ramsey number} which is defined as the minimum number of ``edges'' of a graph $G$ such that $G\rightarrow (G_1,\ldots,G_n)$ and is denoted by $ \hat{R}(G_1,\ldots,G_n)$. One may easily observe that $$ \hat{R}(G_1,\ldots,G_n)\leq \binom{R(G_1,\ldots,G_n)}{2}.$$ The study of Ramsey numbers and also size Ramsey numbers is an important task in Ramsey theory and has been at the center of attention for the last three decades, e.g. see \cite{CFS,FS,RS} and  references therein.

In this paper, we focus on the size Ramsey number $\hat{R}(G,H)$ when one of the graphs $G,H$ is a matching. The investigation of the size Ramsey number of a graph paired with a matching was initiated by Erd\H{o}s and Faudree \cite{EF}. They gave some bounds, asymptotic results and in some cases the exact values for $ \hat{R}(G,tK_2) $, for several classical graphs $G$ such as paths, cycles, complete graphs and complete bipartite graphs. In particular, when $ G $ is the complete graph $ K_n $, it is known that the Ramsey number $ R(K_n,tK_2)$ is equal to $n+2t-2 $  (see \cite{L,FS-matching}) and thus, $ \hat{R}(K_n,tK_2)\leq \binom{n+2t-2}{2} $. Erd\H{o}s and Faudree in \cite{EF} proved that the equality holds when $ n $ is large with respect to $ t $. More precisely, they showed the following.
\begin{thm}\label{thm:erdos1} {\rm \cite{EF}}
	For every integers $ t\geq 1 $ and $ n\geq 4t-1 $, we have
	\begin{equation} \label{eq:Kn}
	\hat{R}(K_n,tK_2)= \binom{R(K_n,tK_2)}{2}= \binom{n+2t-2}{2}.
	\end{equation}
\end{thm}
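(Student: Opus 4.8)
The plan is to establish the two bounds separately. The upper bound is immediate: since $R(K_n,tK_2)=n+2t-2$ (\cite{L,FS-matching}), we have $K_{n+2t-2}\to(K_n,tK_2)$ and hence $\hat R(K_n,tK_2)\le\binom{n+2t-2}{2}$. Writing $N:=n+2t-2$, the content of the theorem is the matching lower bound, so the goal is to show that every graph $G$ with $G\to(K_n,tK_2)$ satisfies $e(G)\ge\binom N2$; I would argue by contradiction, starting from a $G$ that arrows with $e(G)<\binom N2$. The starting point is the reformulation that $G\to(K_n,tK_2)$ holds precisely when, for every $B\subseteq G$ with $\nu(B)\le t-1$, the graph $G-E(B)$ still contains $K_n$; equivalently, no subgraph of $G$ of matching number at most $t-1$ meets (in an edge) every $n$-clique of $G$. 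By the Gallai--Edmonds description of edge-maximal graphs of bounded matching number it suffices to test this over subgraphs $B$ consisting of all edges of $G$ incident to a vertex set $A$ together with the edges inside pairwise disjoint cliques $C_1,\dots,C_m$ of $G$, where $A,C_1,\dots,C_m$ are disjoint and $|A|+\sum_i\lfloor|C_i|/2\rfloor\le t-1$; such a $B$ fails to meet an $n$-clique $Q$ exactly when $Q\cap A=\varnothing$ and $|Q\cap C_i|\le1$ for all $i$. So the task is to produce such $A$ and $C_1,\dots,C_m$ for which no $n$-clique of $G$ escapes. Since deleting a vertex lying in no $n$-clique changes neither the arrowing relation nor the inequality $e(G)<\binom N2$, I may assume every vertex of $G$ is in some $n$-clique.

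Next I would split on $\omega:=\omega(G)$. If $\omega\le n-1$ there is no $n$-clique and $B=\varnothing$ works; if $\omega\ge N$ then $e(G)\ge\binom\omega2\ge\binom N2$, contrary to assumption. Hence $n\le\omega\le N-1$; put $r:=\omega-n+1\in\{1,\dots,2t-2\}$ and fix a maximum clique $W$. From the elementary bound $\alpha(H)\ge|V(H)|-2\nu(H)$ one gets that any usable $B$ must satisfy $\nu(B[W])\ge\lceil r/2\rceil$ in order to destroy every $n$-clique contained in $W$, while conversely $\lceil r/2\rceil$ vertex-disjoint triangles inside $W$ (which fit, since $\omega=n+r-1$ is large compared with $r$ when $n\ge4t-1$) force $\alpha(B[W])\le n-1$ and so destroy all of them. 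Spending $\lceil r/2\rceil$ budget units on such triangles leaves a residual budget $\ell:=t-1-\lceil r/2\rceil\ge0$, which is large exactly when $\omega$ is small. What remains is to choose the triangle packing inside $W$ together with a set $A$ of at most $\ell$ vertices outside $W$ so that every $n$-clique using a vertex outside $W$ either meets $A$ or has two vertices in one of the chosen triangles; any such choice yields a usable $B$ hitting all $n$-cliques and contradicts $G\to(K_n,tK_2)$.

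Thus everything reduces to the assertion I expect to be the main obstacle: when $G$ arrows and $n\le\omega\le N-1$, the number of edges of $G$ incident to $V(G)\setminus W$ is at least $\binom N2-\binom\omega2$, so that, together with the $\binom\omega2$ edges inside $W$, one gets $e(G)\ge\binom N2$. The engine is a dichotomy. If all $n$-cliques of $G$ that meet $V(G)\setminus W$ were clustered around a common sub-clique $C$ of $W$, then --- because $n\ge4t-1$ forces $|C|$ to be large relative to $r$ --- one could place the $\lceil r/2\rceil$ triangles inside $C$, and a pigeonhole argument would destroy every such clustered clique together with every clique inside $W$; then $B=$ ``those triangles'' already hits all $n$-cliques, contradicting arrowing. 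So the $n$-cliques meeting $V(G)\setminus W$ are forced to be ``spread out'': there must be many vertices outside $W$, each (by the normalization) lying in an $n$-clique and hence of degree at least $n-1$, and no two such $n$-cliques can overlap $W$, or each other, in too many vertices.

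Converting this dichotomy into the exact bound $\binom N2-\binom\omega2$ calls for a nested case analysis organized by the relevant overlap sizes; it is here that $n\ge4t-1$ is used quantitatively, making every ``clustered'' sub-configuration fail to arrow while forcing the ``spread'' sub-configurations to carry enough edges. When $\ell>0$ the residual problem --- hitting the remaining external $n$-cliques with budget $\ell$ --- is itself of the same arrow type, and one handles it by induction on $t$, using the easy observation that deleting any $j$ vertices turns $G\to(K_n,tK_2)$ into $(G-v_1-\cdots-v_j)\to(K_n,(t-j)K_2)$; the base case $t=1$ is the trivial identity $\hat R(K_n,K_2)=\binom n2$. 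Finally, when $\omega>n$ one must also keep track of the maximal cliques of $G$ of sizes strictly between $n$ and $\omega$, but these are governed by the same dichotomy, so the bookkeeping in this last part --- rather than any new idea --- is where I anticipate the bulk of the technical work.
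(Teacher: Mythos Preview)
Your approach is genuinely different from the paper's and, as written, has a real gap at exactly the point you flag as ``the main obstacle.''

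The paper does not argue directly with cliques of $G$ at all. It proves (\pref{thm:kng2} via \pref{lem:chi}) that $\hat{R}(K_n,tK_2)=g(n,t)$ for \emph{all} $n,t$: if $|E(F)|<g(n,t)$ one finds $S\subseteq V(F)$ with $\chi(F-S)\le n-2$ and $F[S]\not\supseteq tK_2$, and then colouring the edges inside $S$ blue and the rest red gives a colouring with no blue $tK_2$ and no red $K_n$. The special case $n\ge 4t-1$ then falls out of the purely arithmetic \pref{thm:nlarge}, which characterizes exactly when $g(n,t)=\binom{n+2t-2}{2}$. So the paper's lower bound is a chromatic argument plus arithmetic, with no clique–structure dichotomy and no induction on $t$.

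Your plan, by contrast, fixes a maximum clique $W$ and tries either to build a blue graph $B$ of matching number $\le t-1$ hitting every $K_n$, or to count enough edges outside $W$. The setup is fine: the reformulation of $G\to(K_n,tK_2)$, the bound $\nu(B[W])\ge\lceil r/2\rceil$, the triangle packing inside $W$, and the observation $G-v\to(K_n,(t-1)K_2)$ are all correct. The problem is that the dichotomy is never made precise. ``Clustered around a common sub-clique $C$ of $W$'' is not defined, and the pigeonhole you invoke does not work without a strong hypothesis: an external $n$-clique $Q$ with $|Q\cap W|=n-1$ misses $r$ vertices of $W$, and if those $r$ missed vertices happen to be two per triangle, then $Q$ meets each triangle in at most one vertex and is not hit. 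So ``place the triangles inside $C$'' does not automatically destroy the clustered cliques unless ``clustered'' means something as strong as $C\subseteq Q\cap W$, and you give no reason such a $C$ should exist or, failing that, why its nonexistence forces at least $\binom{N}{2}-\binom{\omega}{2}$ edges incident to $V(G)\setminus W$. That last inequality is precisely the theorem's content in the range $n\le\omega\le N-1$, and you defer it entirely to an unspecified ``nested case analysis.'' The induction you propose also does not close the gap: passing from $t$ to $t-j$ drops the target from $\binom{n+2t-2}{2}$ to $\binom{n+2(t-j)-2}{2}$, and you would still need to account for the missing $\sum_{i=0}^{j-1}(n+2t-3-2i)$ edges at the deleted vertices, which is again the same edge-counting you have not done.

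In short, the scaffolding is reasonable, but the load-bearing step—quantifying ``spread out'' and turning it into the exact bound—is absent, and the sketch you give for the ``clustered'' side already fails on simple configurations. The paper's chromatic approach sidesteps all of this and yields the stronger, tight result for every $n$ and $t$.
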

It should be noted that the condition $ n\geq 4t-1 $ in \pref{thm:erdos1} is not tight. However, for small values of $n$, they showed that the statement of this theorem does not hold, as follows.
\begin{thm} \label{thm:erdos2} {\rm \cite{EF}}
	For every positive integer $ n $, we have
	\[\hat{R}(K_n,2K_2)= \begin{cases}
	2\binom{n}{2} & 2\leq n\leq 5, \\
	\binom{n+2}{2} & n\geq 6.
	\end{cases}
	\]
\end{thm}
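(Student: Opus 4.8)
The plan is to prove matching upper and lower bounds on $\hat R(K_n,2K_2)$.

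\emph{Upper bound.} Since $R(K_n,2K_2)=n+2$, we have $K_{n+2}\to(K_n,2K_2)$, so $\hat R(K_n,2K_2)\le\binom{n+2}{2}$. One also has $2K_n\to(K_n,2K_2)$: in a $2$-colouring of $2K_n$ with no blue $2K_2$ the blue edges are pairwise intersecting, and since edges in distinct copies of $K_n$ are automatically disjoint, all blue edges lie in one copy, leaving the other copy entirely red. Hence $\hat R(K_n,2K_2)\le\min\{\binom{n+2}{2},2\binom n2\}$, and as $2\binom n2<\binom{n+2}{2}$ exactly when $n^2-5n-2<0$, i.e.\ for $n\le5$, this minimum is $2\binom n2$ for $2\le n\le5$ and $\binom{n+2}{2}$ for $n\ge6$, which gives the asserted upper bounds.

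\emph{Reformulation and dichotomy.} A graph with no two independent edges is a subgraph of a star centred at a single vertex or of a triangle; hence $G\to(K_n,2K_2)$ if and only if $G\supseteq K_n$, $G-v\supseteq K_n$ for every vertex $v$, and $G$ still contains $K_n$ after the three edges of any one of its triangles are deleted (these being exactly the requirements that a red $K_n$ survives every admissible blue graph). I would then take $G$ edge-minimal with $G\to(K_n,2K_2)$. If $G$ contains two vertex-disjoint copies of $K_n$, then $e(G)\ge2\binom n2$, which is at least the target in every case. Otherwise the copies of $K_n$ in $G$ form an intersecting family with empty total intersection (a common vertex $v$ would force $G-v$ to contain no $K_n$), and it then suffices to prove $e(G)\ge\binom{n+2}{2}$, which for $n\le5$ already exceeds $2\binom n2$.

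\emph{Lower bound via the clique number.} Put $\omega=\omega(G)\ge n$. If $\omega\ge n+2$ then $e(G)\ge\binom{\omega}{2}\ge\binom{n+2}{2}$. If $\omega=n+1$, fix a $K_{n+1}$ on a set $C$; applying the triangle condition to a triangle inside $C$ yields a copy $A'$ of $K_n$ not contained in $C$, and a short analysis of how $A'$ attaches to $C$ should give $e(G)\ge\binom{n+2}{2}$ — in the tightest case, $C\cup A'$ is a $K_{n+2}$ with one or two edges missing, and since $K_{n+2}$ minus a single edge does not arrow $(K_n,2K_2)$ (the triangle-deletion test fails, as a short check confirms), $G$ must carry enough further edges, inside those $n+2$ vertices or on extra vertices (which in a minimal $G$ have positive degree), to reach the bound.

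\emph{The main obstacle.} The hard case is $\omega=n$, i.e.\ $G$ has no $K_{n+1}$. Here one obtains two copies $A,A'$ of $K_n$ with $1\le|A\cap A'|\le n-2$; the edges inside $A\cup A'$ number $2\binom n2-\binom{|A\cap A'|}{2}$ plus the cross-edges between $A\setminus A'$ and $A'\setminus A$, and this total falls just short of $\binom{n+2}{2}$, while $\omega=n$ forbids the complete bipartite join between $A\setminus A'$ and $A'\setminus A$ that would close the gap (it would create a $K_{n+2}$). Making up the deficit requires iterating the triangle condition: each triangle meeting all three of $A\cap A'$, $A\setminus A'$, $A'\setminus A$ forces a further copy of $K_n$, and the empty-intersection property supplies copies avoiding any prescribed vertex of $A\cup A'$, producing new vertices and edges; one then has to bound the accumulated edge count tightly enough to land on exactly $\binom{n+2}{2}$ rather than stalling near $\binom{n+1}{2}$. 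Getting this accounting sharp is the crux, and it is exactly here that the threshold $n=6$ emerges, since for $n\le5$ the looser target $2\binom n2$ leaves slack to spare.
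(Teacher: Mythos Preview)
Your upper bound is correct and coincides with the paper's general upper bound $\hat R(K_n,tK_2)\le g(n,t)$ specialized to $t=2$.

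The lower bound, however, is not established. You explicitly leave the two decisive cases unfinished: for $\omega(G)=n+1$ you write that ``a short analysis \ldots\ should give $e(G)\ge\binom{n+2}{2}$'' and then merely indicate why the near-extremal configuration $K_{n+2}$ minus an edge fails, without bounding $e(G)$; for $\omega(G)=n$ you identify the edge-accounting as ``the crux'' and stop. As written, the only completed lower bound is $e(G)\ge 2\binom n2$ in the disjoint-$K_n$ case, which does not reach $\binom{n+2}{2}$ for $n\ge6$. The iteration you propose (repeatedly applying the triangle condition to force new $K_n$'s and new vertices) is plausible in spirit, but turning it into a sharp count that lands exactly on $\binom{n+2}{2}$ rather than stalling near $\binom{n+1}{2}$ is real work that you have not done.

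The paper's route is entirely different and bypasses the clique-number case analysis. \pref{thm:erdos2} is quoted from \cite{EF}, but it also follows at once from the main result $\hat R(K_n,tK_2)=g(n,t)$ (\pref{thm:kng2}) by computing $g(n,2)=\min\bigl\{2\binom n2,\binom{n+2}{2}\bigr\}$. The lower bound there comes from the chromatic \pref{lem:chi}(ii): if $|E(F)|<g(n,2)$ then there is $S\subseteq V(F)$ with $\chi(F-S)\le n-2$ and $F[S]$ containing no $2K_2$. Colouring all edges with both ends in $S$ blue and the remaining edges red gives no blue $2K_2$ (by the condition on $F[S]$) and no red $K_n$ (the red graph is properly $(n-1)$-colourable: take the $(n-2)$-colouring of $F-S$ and add $S$ as one further class, which is independent in red). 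Thus $F\not\to(K_n,2K_2)$. This argument never inspects $\omega(F)$, never iterates the triangle condition, and works uniformly for all $t$; the price is proving the structural \pref{lem:chi}, which is where the real effort in the paper lies.
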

The case when $ t $ is much larger with respect to $ n $ is more interesting, where the equality in \eqref{eq:Kn} does not hold.
In this regard, for every graph $ G $, Erd\H{o}s and Faudree \cite{EF} defined
\[ \hat{r}_{_\infty} (G)=\lim_{t\to \infty} \frac{\hat{R}(G,tK_2)}{t\, \hat{R}(G,K_2)}.
\]
They proved the limit always exists. In particular, they showed that $ 2/n\leq r_\infty(K_n)\leq 8/n $. Also, it is evident that $$\frac{\hat{R}(K_n,tK_2)}{t\, \hat{R}(K_n,K_2)} \leq \frac{\binom{n+2t-2}{2}}{t\binom{n}{2}}.$$
Thus, 
\begin{equation} \label{eq:Mn}
r_\infty (K_n) \leq \min\left\{\dfrac{\binom{n+2t-2}{2}}{t\binom{n}{2}}\mid t\in \mathbb{N}\right\}.
\end{equation}
This gives rise to the following question due to Erd\H{o}s and Faudree.

\begin{ques} \label{ques:erdos} {\rm \cite{EF}}
	For every positive integer $ n $, define
	\[M_n= \min\left\{\dfrac{\binom{n+2t-2}{2}}{t\binom{n}{2}}\mid t\in \mathbb{N}\right\}.\]	
	Is it true that $  \hat{r}_{_\infty}(K_n)=M_n $?
\end{ques}

They proved that the answer to \pref{ques:erdos} is positive  when $ n\leq 4 $. In this paper, we determine the exact value of $\hat{R}(K_n,tK_2)$ for every positive integers $n,t$ and through this, we give an affirmative answer to \pref{ques:erdos}. \\

The paper is organized as follows. In Section~\ref{sec:main}, for given graphs $G$ and $H$ (with no isolated vertex), we give a lower bound for the size Ramsey number $\hat{R}(G,H) $ in terms of the chromatic number of $G$ and the number of vertices of $H$ (see \pref{thm:main}). Then, we use this result to compute the exact value of $\hat{R}(K_n,tK_2)$ for every positive integers $n,t$ (see \pref{thm:kng2}).
In Section 3, we provide some implications of \pref{thm:kng2}. In particular, we improve \pref{thm:erdos1} and find a necessary and sufficient condition for positive integers $ n,t $ for which \eqref{eq:Kn} holds (see \pref{thm:nlarge}). We also give an affirmative answer to \pref{ques:erdos} for every positive integer $n$ (see \pref{thm:EFQ}). Finally in Section $4$, we present some further remarks and an open problem.

\section{Main results} \label{sec:main}
Let $ G $ be a given graph. Also, suppose that the graph $ H $ is the disjoint union of the graphs $ H_1,\ldots, H_l $ and for every $ 1\leq i\leq l $ assume that $F_i$ is a graph with minimum $|E(F_i)|$ such that  $ F_i\rightarrow (G,H_i) $. Now, let $F$ be the disjoint union of $ F_1,\ldots, F_l $. It is clear that $ F \rightarrow (G,H) $ and so we have $\hat{R}(G,H)\leq |E(F)|=\sum_{i=1}^{l} |E(F_i)|.$ On the other hand, for every $ 1\leq i\leq l$ we have $$|E(F_i)|=\hat{R}(G,H_i)\leq \binom{R(G,H_i)}{2}.$$  Therefore, we have
\begin{equation}
\label{eq:union}
\hat{R}(G,H) \leq \min \left\{
\sum_{i=1}^{l} \binom{R(G,H_i)}{2} : H \text{ is the disjoint union of } H_1,\ldots, H_l
\right\}.
\end{equation}

%Let $\mathcal{S}$ be the set of all sequences of positive integers and for every $ I=(s_1,s_2,\ldots, s_l)\in \mathcal{S}$ assume that $|I|=s_1+\cdots+s_l$. For every positive integers $ n,t $ and for each sequence $ I=(s_1,s_2,\ldots, s_l) $ of positive integers let
%$$g_{I}(n,t)= \sum_{i=1}^l \binom{n+2s_i-2}{2}.$$
%Now, for every positive integers $ n,t $, define
%\[
%g(n,t)= \min \left\{ g_{I}(n,t) : I\in \mathcal{S},  |I|\geq t
%\right\},
%\]
%For every positive integers $ n,t $, it is known that $ R(K_n,tK_2)= n+2t-2 $. Therefore, \eqref{eq:union} ensures that $ \hat{R}(K_n,tK_2)\leq g(n,t) $. The main goal of this paper is to prove the other side of this inequality and then determining the exact value of
%$ \hat{R}(K_n,tK_2)$. For some technical reasons, we need to define the following similar function. For every positive integers $ n,t $, define
%\[
%\hat{g}(n,t)= \min \left\{ \hat{g}_{I}(n,t) :  I\in \mathcal{S},  |I|\geq 2t
%\right\},
%\]
%where $\hat{g}_{I}(n,t)=\sum_{i=1}^l \binom{n+s_i-2}{2}$.

Moreover, it is known that for every positive integers $ n,t $,  $ R(K_n,tK_2)= n+2t-2 $ (see \cite{L}).
Now, for every positive integers $ n,t $, define
\[
g(n,t)= \min \left\{ \sum_{i=1}^l \binom{n+2s_i-2}{2} : l\in \mathbb{Z}^+, s_i\in \mathbb{Z}^+, s_1+\cdots+s_l\geq t
\right\}.
\]
%It is clear that the minimum in the definition of $ g(n,t) $ occurs on even integers $ s_1,\ldots, s_l $ such that $ s_1+\cdots+s_l= 2t $
 Therefore, \eqref{eq:union} ensures that $ \hat{R}(K_n,tK_2)\leq g(n,t) $. The main goal of this paper is to prove the other side of this inequality, thereby determining the exact value of
$ \hat{R}(K_n,tK_2)$. For some technical reasons, we need to define the following similar function. For every positive integers $ n,t $, define
\[
\hat{g}(n,t)= \min \left\{ \sum_{i=1}^l \binom{n+s_i-2}{2} : l\in \mathbb{Z}^+, s_i\in \mathbb{Z}^+, s_1+\cdots+s_l\geq 2t
\right\}.
\]
It is evident that the functions $ g(n,t) $ and $ \hat{g}(n,t) $ are both increasing with respect to $n$ and $ t $.
The following theorem provides a lower bound for the size Ramsey number of graphs in terms of the functions $ \hat{g}(n,t) $ and $ g(n,t)$.
A vertex coloring of a given graph $G$ is called {\it proper} if any two adjacent vertices receive different colors. The minimum number of colors for which $ G $ has a proper vertex coloring is called the {\it chromatic number} of $ G $ and is denoted by $\chi(G)$.

\begin{thm} \label{thm:main}
Let $ n\geq 2,t\geq 1 $ be two integers. Also, let $ G $ be a graph such that $ \chi(G)\geq n $ and $ H $ be a graph with no isolated vertex such that $ |V(H)|\geq 2t $. Then, $\hat{R}(G,H)\geq \hat{g}(n,t)$. Moreover, if $ H $ contains $ tK_2 $, then $ \hat{R}(G,H)\geq g(n,t)  $.
\end{thm}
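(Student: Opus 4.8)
The plan is to prove the contrapositive-style bound directly: assume $G$ is a graph with $F \to (G,H)$ and $\hat R(G,H) = |E(F)|$, and show $|E(F)| \geq \hat g(n,t)$ (respectively $\geq g(n,t)$ when $H \supseteq tK_2$). The natural strategy is to exhibit a "bad" red/blue coloring of $F$ — one with no red $G$ and no blue $H$ — whenever $|E(F)|$ is too small, and to extract from the structure of such colorings a decomposition witnessing the value $\hat g(n,t)$ or $g(n,t)$.

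First I would fix an optimal red/blue coloring of $F$: color edges red/blue so that the red subgraph $R$ has no copy of $G$, subject to that maximizing (or in some suitable sense extremizing) the blue part; since $F \to (G,H)$, the blue subgraph $B = F \setminus R$ must contain a copy of $H$. Because $\chi(G) \geq n$, a red subgraph with no copy of $G$ must in particular have the property that each of its connected components is $(n-1)$-colorable is false in general — so instead I would use the weaker but always-true fact that if the red graph had an $n$-clique-like robustness we could embed $G$; the right tool is: if $R$ restricted to some vertex set has chromatic number $\geq n$... Actually the cleanest route uses the following idea. For the red graph $R$ with no copy of $G$, look at a proper vertex coloring obstruction; more usefully, on the copy of blue $H$ sitting in $F$, consider the vertex set $V(H)$, $|V(H)| \geq 2t$, and analyze how red edges interact with it. The key combinatorial step is: partition $V(H)$ (or a $2t$-subset of it, or the $2t$ vertices spanning $tK_2$ in the second case) according to the components of the red graph they fall into, getting parts of sizes $s_1, \dots, s_l$ with $\sum s_i \geq 2t$ (resp., when counting edges of $tK_2$, parts of sizes contributing $\sum s_i \geq t$ after regrouping); then show that within the part coming from a single red component, the red graph must be "dense enough" to avoid $G$ — specifically it must contain $K_{n-1}$-like structure on the relevant vertices, forcing at least $\binom{n+s_i-2}{2}$ edges locally, because otherwise one could find a proper $(n-1)$-coloring of the red component and recolor to build a red $G$ out of the freed-up blue edges. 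Summing over $i$ gives $|E(F)| \geq \sum_i \binom{n+s_i-2}{2} \geq \hat g(n,t)$.

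I expect the main obstacle to be making the local density argument precise: showing that the red graph on (component $\cap$ the chosen $2t$-set plus the $n-2$ "extra" vertices of an embedded near-$G$) really must have at least $\binom{n+s_i-2}{2}$ edges, i.e. must essentially be a clique on $n + s_i - 2$ vertices. This is where $\chi(G) \geq n$ does the work: if the red component together with the relevant blue matching/$H$-edges could be properly $(n-1)$-colored in a way compatible with the blue structure, then swapping an independent set's worth of edges yields a red copy of $G$ (since $G$ is $(n-1)$-color-critical-ish only via $\chi(G)\ge n$, one needs to be careful — the actual claim is that $R$ plus a few blue edges contains $G$, contradiction). Handling the interface between the two colors — ensuring the recoloring that builds a red $G$ does not destroy the property we assumed, and correctly accounting for shared vertices among red components so the $s_i$ genuinely sum to at least $2t$ (resp. the matching edges are not split badly) — is the delicate part. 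The second statement, with $g(n,t)$ and the hypothesis $tK_2 \subseteq H$, should follow from the same analysis but tracking the $2t$ endpoints of a fixed blue matching $tK_2$ and noting that each red component meeting this matching in $2a_i$ of those endpoints can be charged $\binom{n + 2a_i - 2}{2}$, since a red component that is "one vertex short" on a matched pair still forces the full clique; regrouping and using the definition of $g(n,t)$ finishes it. Throughout, I would lean on the monotonicity of $g$ and $\hat g$ in both arguments (noted just before the theorem) to pass freely between "$\geq t$" and "$= t$" in the defining minima.
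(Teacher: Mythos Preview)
Your proposal does not converge on a working argument, and the step you yourself flag as ``the main obstacle'' is a genuine gap rather than a detail to be filled in. You try to fix a coloring of $F$ with no red $G$, locate a blue copy of $H$, partition its $\ge 2t$ vertices by red components into parts of sizes $s_1,\dots,s_l$, and then assert that each part forces at least $\binom{n+s_i-2}{2}$ edges of $F$. Nothing supports this: the red graph is only assumed to be $G$-free, and $G$-free graphs can be arbitrarily sparse (the empty graph is $G$-free), so ``red components'' carry no density information whatsoever. Your proposed mechanism --- ``otherwise one could properly $(n-1)$-color the red component and recolor to build a red $G$'' --- points the wrong way: having chromatic number $\le n-1$ is \emph{consistent} with being $G$-free, not an obstruction to it, and since you never pin down which coloring you start from (``optimal'' is left undefined, and you oscillate between the contrapositive and a direct argument), a recoloring step has no leverage. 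The charge of $\binom{n+s_i-2}{2}$ edges to each part is unfounded.

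The paper proceeds in the opposite direction and isolates all the content in a purely structural lemma about $F$, with no coloring involved. Lemma~\ref{lem:chi} says: if $|E(F)|<\hat g(n,t)$ then there exists $S\subseteq V(F)$ with $|S|\le 2t-1$ and $\chi(F-S)\le n-2$; and if $|E(F)|<g(n,t)$ then one can take $S$ so that in addition $F[S]$ contains no $tK_2$. Given this, the theorem is immediate: color every edge inside $S$ red and every other edge blue. The red graph lives on at most $2t-1$ vertices (respectively, on a set carrying no $tK_2$), so it contains no copy of $H$; any blue subgraph has $S\cap V(\cdot)$ independent and the rest sitting inside $F-S$, hence chromatic number at most $\chi(F-S)+1\le n-1<\chi(G)$, so it contains no copy of $G$. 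Thus $F\not\to(G,H)$, a contradiction. All of the work is in proving Lemma~\ref{lem:chi}, which is done by taking a proper $\chi(F)$-coloring of $F$ with color classes $C_1,\dots,C_f$ maximizing $\sum_i|C_i|^2$, setting $S=\bigcup_{i\ge n-1}C_i$, and counting edges between color classes via the observation that every vertex of $C_j$ has a neighbor in each $C_i$ with $i<j$. Your proposal contains no analog of this lemma or of the explicit coloring it enables.
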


The above theorem immediately implies that the exact value of $ \hat{R}(K_n,tK_2) $ is equal to $ g(n,t) $ for all integers $ n,t $.
In fact, the inequality $ \hat{R}(K_n,tK_2)\leq g(n,t) $ follows from \eqref{eq:union} and the inequality $ \hat{R}(K_n,tK_2)\geq g(n,t)  $ follows from  \pref{thm:main}. Therefore, we conclude the following theorem, which is the main result of the paper.
\begin{thm} \label{thm:kng2}
For every integers $ n\geq 2,t\geq 1 $, we have $\hat{R}(K_n,tK_2)= g(n,t)  $.
\end{thm}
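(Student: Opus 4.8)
The plan is to prove \pref{thm:kng2} by combining the easy upper bound with the lower bound from \pref{thm:main}. The upper bound $\hat{R}(K_n,tK_2)\le g(n,t)$ is already established in the excerpt: it follows from \eqref{eq:union} together with the identity $R(K_n,sK_2)=n+2s-2$, since one may take $H=tK_2$ and split it as a disjoint union of matchings $s_1K_2,\ldots,s_lK_2$ with $s_1+\cdots+s_l=t$, use an optimal Ramsey-size graph $F_i$ for each $(K_n,s_iK_2)$, and take the disjoint union. So the entire content of the theorem reduces to the reverse inequality $\hat{R}(K_n,tK_2)\ge g(n,t)$.

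For that I would simply instantiate \pref{thm:main} with $G=K_n$ and $H=tK_2$. We need to check the hypotheses: $\chi(K_n)=n\ge n$, so the chromatic-number condition holds with equality; $H=tK_2$ has no isolated vertex; and $|V(tK_2)|=2t\ge 2t$, so the vertex-count condition holds. Finally, $tK_2$ trivially ``contains $tK_2$'', so the second (stronger) conclusion of \pref{thm:main} applies and gives $\hat{R}(K_n,tK_2)\ge g(n,t)$. Combining the two inequalities yields $\hat{R}(K_n,tK_2)=g(n,t)$, which is exactly the assertion.

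In short, \pref{thm:kng2} is an immediate corollary of \pref{thm:main} and \eqref{eq:union}; there is essentially no obstacle at this stage, because all the genuine work has been pushed into \pref{thm:main}. The real difficulty — which I would expect to be the heart of the paper — lies in proving \pref{thm:main} itself, i.e.\ showing that any graph $F$ with $F\to(G,H)$ must have at least $\hat{g}(n,t)$ (respectively $g(n,t)$) edges. I would anticipate that argument proceeding by taking a putative Ramsey graph $F$ with too few edges, cleverly choosing a blue subgraph of $F$ that avoids a copy of $H$ (using that $H$ has no isolated vertex and $|V(H)|\ge 2t$, so one needs to leave at most $2t-1$ vertices ``uncovered''), and then forcing the red graph — what remains of $F$ — to split into components each of which fails to contain $K_n$ unless it has enough edges; the function $g$/$\hat{g}$ then arises as the optimal way to distribute those edges among components. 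But for the proof of \pref{thm:kng2} specifically, no such machinery is needed beyond citing \pref{thm:main}.
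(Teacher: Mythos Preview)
Your proposal is correct and matches the paper's own argument essentially verbatim: the paper derives \pref{thm:kng2} immediately from \pref{thm:main} for the lower bound and from \eqref{eq:union} (together with $R(K_n,sK_2)=n+2s-2$) for the upper bound. As you correctly anticipate, all the substantive work is deferred to the proof of \pref{thm:main}.
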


In order to prove Theorem~\ref{thm:main}, we need the following two technical lemmas. The first lemma shows that the difference of the functions $ g(n,t) $ and $ \hat{g}(n,t) $ is bounded by a function of $ n $ when $ n\geq 4 $.
\begin{lem} \label{lem:diff}
For every integers $ n\geq 4 $ and $ t\geq 1 $, we have
\[ 0\leq g(n,t)-\hat{g}(n,t)\leq \frac{n-3}{2}. \]
\end{lem}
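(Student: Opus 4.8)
The inequality $g(n,t)\ge\hat g(n,t)$ is immediate: from a list $s_1,\dots,s_l$ with $\sum_i s_i\ge t$ realizing $g(n,t)$, the doubled list $2s_1,\dots,2s_l$ has $\sum_i 2s_i\ge 2t$ and the same value $\sum_i\binom{n+2s_i-2}{2}$, hence competes in the minimum defining $\hat g(n,t)$. For the upper bound I would first record the elementary identity
\[
\binom{n+s-2}{2}=(2n-5)\,s+\binom{n-2-s}{2}\qquad(s\in\mathbb Z),
\]
(check by expansion; throughout $\binom k2:=k(k-1)/2$, so $\binom k2\ge 0$ for every integer $k$, vanishing exactly for $k\in\{0,1\}$). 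Since $x\mapsto\binom x2$ is increasing on $x\ge1$, both minima are attained with $\sum_i s_i$ \emph{equal} to $2t$ (for $g$, with every $s_i$ an even number $\ge2$). Summing the identity over such a decomposition and putting $\phi(s):=\binom{n-2-s}{2}$ gives
\[
\hat g(n,t)=(2n-5)\,2t+\hat G,\qquad g(n,t)=(2n-5)\,2t+G,
\]
where $\hat G$ (resp.\ $G$) is the least value of $\sum_i\phi(s_i)$ over all compositions $s_1+\dots+s_l=2t$ with $s_i\ge1$ (resp.\ with all $s_i$ even). So the task reduces to $0\le G-\hat G\le(n-3)/2$, and only the right inequality is left.

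Next I would use the shape of $\phi$. As a function of the integer $s$, $\phi$ is a quadratic with leading coefficient $\tfrac12$: it is strictly convex, nonnegative, it vanishes exactly at the \emph{two consecutive} integers $n-3$ and $n-2$ (so one of these is even), and its second difference $\phi(s-1)-2\phi(s)+\phi(s+1)$ is identically $1$. Strict convexity forces every composition attaining $\hat G$ to be \emph{balanced} — all parts in $\{q,q+1\}$ for one $q$ — since otherwise shifting a unit from the largest part to the smallest strictly lowers $\sum\phi$; in a balanced composition all odd parts equal whichever of $q,q+1$ is odd, and there is an even number of them because $2t$ is even. The elementary operation is: take a balanced optimal composition, pair its odd parts, and replace each pair $\{v,v\}$ by $\{v-1,v+1\}$ (or, when $v=1$, replace $\{1,1\}$ by the single part $2$); the total is unchanged, those parts become even, and by the second-difference identity $\sum\phi$ rises by exactly $1$ per pair. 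Hence $G\le\hat G+k$ as soon as some optimal composition has $2k$ odd parts, and everything comes down to producing an optimal composition with at most $n-3$ odd parts.

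The hard part is this last claim, and the key is that $n-3$ and $n-2$ are coprime, so the semigroup they generate has conductor $(n-3)(n-4)$; equivalently the intervals $[\ell(n-3),\ell(n-2)]$, $\ell\ge1$, already cover all integers $\ge(n-3)(n-4)$. For $2t\ge(n-3)(n-4)$ this gives $\hat G=0$, and among the representations $2t=\alpha(n-2)+\beta(n-3)$ with $\alpha,\beta\ge0$ I would pick one making the multiplicity of whichever generator is odd as small as possible; a residue computation (using that this multiplicity is forced to be even, as $2t$ is) bounds it by $n-4$, yielding an optimal composition with $\le n-4$ odd parts and hence $G\le\hat G+(n-4)/2<\hat G+(n-3)/2$. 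For the finitely many values with $2t<(n-3)(n-4)$ I would argue directly: when $2t<n-3$ the single part $\{2t\}$ is optimal (merging two parts $a,b$ changes $\sum\phi$ by $ab-\binom{n-2}{2}\le0$ in this range, so iterated merging reaches $\{2t\}$ without increasing $\sum\phi$) and it is already even, so $G=\hat G$; and when $n-3\le 2t<(n-3)(n-4)$ an optimal balanced composition cannot have $\ell\ge n-2$ parts, for then all parts would be smaller than $n-4$, forcing $\sum\phi\ge\ell\ge n-2$, whereas $\hat G\le n-4$ in this range — so it has at most $n-3$ odd parts. Assembling the cases gives $G-\hat G\le(n-3)/2$.

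I expect the genuine difficulty to be exactly the uniform control, over all $t$, of the number of odd parts in an optimal composition; the linearization identity and the convexity/second-difference facts are routine, and it is the arithmetic of the two "free'' sizes $n-3,n-2$ (exploited via their coprimality) together with the book-keeping for the $O(n^2)$ exceptional values of $t$ where $\hat G>0$ that needs care. One should also note that the pairing operation costs \emph{exactly} $1$ — this is what makes the bound come out to the stated $(n-3)/2$ rather than to a larger constant.
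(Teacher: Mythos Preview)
Your linearization $\binom{n+s-2}{2}=(2n-5)s+\phi(s)$ with $\phi(s)=\binom{n-2-s}{2}$, the reduction to bounding $G-\hat G$, the balancedness of optimal compositions, and the pairing operation (with cost exactly $1$ per pair via the second-difference identity) are all correct and match the paper's approach. Your treatment of the range $2t\ge(n-3)(n-4)$ via the numerical semigroup $\langle n-3,n-2\rangle$ is also correct and is a nice alternative to the paper's argument.

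However, the middle case $n-3\le 2t<(n-3)(n-4)$ has a genuine gap: the assertion ``$\hat G\le n-4$ in this range'' is false. Take $n=100$ and $2t=150$. The optimal balanced composition is $\{75,75\}$, giving $\hat G=2\binom{23}{2}=506$, which is far larger than $n-4=96$. (One checks that $\{150\}$ gives $1378$, that three or more balanced parts give at least $3\binom{48}{2}$, and that unbalanced two-part splits such as $\{97,53\}$ are worse.) So the contradiction you set up --- ``$\ell\ge n-2$ forces $\sum\phi\ge n-2>\hat G$'' --- does not fire. The conclusion you want (some optimal composition has at most $n-3$ odd parts) is still true here, since the optimum $\{75,75\}$ has only two parts; but your argument does not establish it.

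The paper handles this step differently and more directly, without splitting on the size of $2t$ and without bounding $\ell$. It takes an optimal composition minimizing the number $2q$ of odd parts (all equal to some odd $x$) and shows that $2q\ge n-2$ is impossible by an explicit reshaping: if $x\le n-3$, delete one copy of $x$ and increase $x$ other copies of $x$ to $x+1$; if $x\ge n-2$, insert a new part $n-2$ and decrease $n-2$ copies of $x$ to $x-1$. In each case the sum stays $2t$, the value $\sum\binom{n+s_i-2}{2}$ does not increase, and the number of odd parts strictly drops --- contradicting minimality of $2q$. You could graft this reshaping argument in place of your middle case and the rest of your plan would go through.
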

\begin{proof}
The left inequality immediately follows from the definitions. In order to prove the right inequality, let $I=(s_1,\ldots, s_l)  $ be a sequence of positive integers such that $$ s_1\geq s_2\geq \cdots\geq s_l,~~ s_1+\cdots+s_l\geq 2t,~~ \sum_{i=1}^{l} \binom{n+s_i-2}{2} =\hat{g}(n,t).$$ 
Then, we have $ s_1-s_l\leq 1 $. To see this, note that if $ s_1 \geq s_l+2 $, then in the sequence $I$ one can replace $ s_1,s_l $ with $ s_1-1, s_l+1 $, respectively  to obtain the new sequence $\hat{I}=(\hat{s}_1,\ldots,\hat{s}_l)$ with $\hat{s}_1+\cdots+\hat{s}_l\geq 2t$ and
$$\sum_{i=1}^{l} \binom{n+\hat{s}_i-2}{2}< \sum_{i=1}^{l} \binom{n+s_i-2}{2} =\hat{g}(n,t),$$
which is in contradiction with the definition of $ \hat{g}(n,t) $. Also, we have $s_1+\cdots+s_l=2t $, since otherwise if $ s_1+\cdots+s_l>2t $, then one can replace $ s_1 $ with $ s_1-1 $ in the sequence $I$ to decrease $ \sum_{i} \binom{n+s_i-2}{2} $ once more, which is impossible. Therefore, all $ s_i $'s are among at most two consecutive integers. Let $ x $ be an odd integer such that $ s_i\in\{x-1,x,x+1\} $, for all $ 1\leq i\leq l $ and let $ 2q $ be the number of occurrence of $ x $ in $ I $ (i.e. the number of odd integers in $ I $). Furthermore, choose $ I $ such that $ 2q $ is minimal subject to the conditions  $ s_1+\cdots+s_l=2t $ and $ \sum_{i} \binom{n+s_i-2}{2} =\hat{g}(n,t) $.

Now, we are going to prove that $ 2q\leq n-3 $. For the contrary, suppose that $ 2q\geq n-2 $. Now, if $ x\leq n-3 $, then let $I'= ( s'_1,\ldots, s'_{l-1}) $ be a sequence of integers which is obtained from $ I $ by removing one number $ x $ and replacing $ x $ numbers $ x $ with $ x+1 $. Then, $ s'_1+\cdots+s'_{l-1}=2t $ and
\begin{align*}
  \sum_{i=1}^{l-1} \binom{n+s'_i-2}{2} &=  \sum_{i=1}^l \binom{n+s_i-2}{2}- \binom{n+x-2}{2} + x(n+x-2)  \\
  & = \hat{g}(n,t)+ (n+x-2) (\frac{x-n+3}{2}) \leq \hat{g}(n,t).
\end{align*}
Thus, the definition of $\hat{g}(n,t)$ implies that $x=n-3$ and $\sum_{i=1}^{l-1} \binom{n+s'_i-2}{2}=\hat{g}(n,t).$ Nevertheless, the number of odd integers in $ I' $ is less than $ 2q $ which is in contradiction with the minimality of $ 2q $.
Also, if $ x\geq n-2 $, then let $I''= ( s''_1,\ldots, s''_{l+1}) $ be a sequence of integers which is obtained from $ I $ by adding one number $ n-2 $ and replacing $ n-2 $ numbers $ x $ with $ x-1 $. Then, $ s''_1+\cdots+s''_{l+1}=2t $ and
\begin{align*}
\sum_{i=1}^{l+1} \binom{n+s''_i-2}{2} &=  \sum_{i=1}^l \binom{n+s_i-2}{2} + \binom{2n-4}{2}-(n-2)(n+x-3) \leq \hat{g}(n,t).
\end{align*}
Again, we have $\sum_{i=1}^{l+1} \binom{n+s''_i-2}{2}=\hat{g}(n,t)$. Moreover, the number of odd integers in $ I'' $ is at most $ 2q-(n-3) < 2q $ which is in contradiction with the minimality of $ 2q $.
This implies that $ 2q\leq n-3 $. Now, let $(s'''_1,\ldots,s'''_l)$ be the sequence of even integers which is obtained from $ I $ by replacing $ 2q $ numbers $ x $ in $ I $ with $ q $ numbers $ x-1 $ and $ q $ numbers $ x+1 $. Then, $ s'''_1+\cdots+s'''_l=2t $ and every number $ s'''_i $ is even. Hence,
\[g(n,t)\leq \sum_{i=1}^{l} \binom{n+s'''_i-2}{2} = \sum_{i=1}^{l} \binom{n+s_i-2}{2}+q \leq \hat{g}(n,t)+\frac{n-3}{2}. \]
This completes the proof.
\end{proof}

It is noteworthy that Lemma~\ref{lem:diff} is not true for $ n\leq 3 $. For instance, for $ n=3 $, we have $ \hat{g}(3,t)=2t $ and $ g(3,t)=3t $.

We also need the following lemma which is interesting in its own right. It shows that for every integers $ n\geq 3,t\geq 1 $, if $ G $ is a graph with not many edges (depending on $ n $ and $ t $), then one may remove at most $ 2t-1 $ (or $ 2t $) vertices to diminish the chromatic number of $ G $ to less than $ n-2 $. Note that for a subset of vertices $ S\subseteq V(G) $, by $ G[S] $ we mean the induced subgraph of $G$ on  $ S $.
\begin{lem} \label{lem:chi}
	Let $ n\geq 3,t\geq 1 $ be two integers and $ G $ be a given graph.
\begin{itemize}
\item[{\rm (i)}] If $ |E(G)|<\hat{g}(n,t) $, then there exists a subset $ S\subseteq V(G) $ such that $ |S|\leq 2t-1 $ and $ \chi(G-S)\leq n-2 $.
\item [{\rm (ii)}] If $ |E(G)|<g(n,t) $, then there exists a subset $ S\subseteq V(G) $ such that $ \chi(G-S)\leq n-2 $ and $ G[S] $ does not contain $ tK_2 $. Also, if $ n\geq 4 $, then $ |S|\leq 2t $.
\end{itemize}
\end{lem}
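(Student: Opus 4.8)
The plan is to recast both parts as edge lower bounds and prove them by induction, deleting vertex‑critical subgraphs one at a time. For part (i), set $\phi(n,k):=\min\{\sum_i\binom{n+s_i-2}{2}:s_i\in\mathbb{Z}^+,\ \sum_i s_i\ge k\}$ for $k\ge 1$ and $\phi(n,0)=0$, so that $\hat g(n,t)=\phi(n,2t)$, and let $\tau(G)$ denote the least size of a set $S$ with $\chi(G-S)\le n-2$. Since $k\mapsto\phi(n,k)$ is non‑decreasing, part (i) is equivalent to its contrapositive
\[
\tau(G)\ge k\ \Longrightarrow\ |E(G)|\ge\phi(n,k)\qquad(n\ge 3,\ k\ge 0),
\]
which I would prove by induction on $|V(G)|$. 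The ingredients are: the elementary inequalities $\binom{n+s-2}{2}\ge s(n-2)$ (whence $\phi(n,k)\ge (n-2)k$) and $\phi(n,a+b)\le\phi(n,a)+\phi(n,b)$; the fact that, by strict convexity of $x\mapsto\binom x2$, an optimal sequence for $\phi(n,k)$ may be taken with all parts within $1$ of each other, which pins down the increments $\phi(n,k)-\phi(n,k-1)$; and the structure theory of critical graphs — every graph of chromatic number $\ge n-1$ contains an $(n-1)$‑vertex‑critical subgraph $H$, which is $2$‑connected, has $\delta(H)\ge n-2$ and $|V(H)|\ge n-1$ with equality only for $H=K_{n-1}$, and satisfies $|E(H)|\ge\lceil(n-2)|V(H)|/2\rceil$ together with a Gallai‑type surplus unless $H$ is a clique or a small exceptional graph.

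For the inductive step, we may assume $\chi(G)\ge n-1$ (otherwise $k=0$) and, since $\tau$ is additive over components while $\phi(n,\cdot)$ is subadditive, that $G$ is connected. Choose an $(n-1)$‑vertex‑critical subgraph $H$ of $G$ — the choice must be made carefully, a natural first candidate being one of minimum order — and put $W=V(H)$, $w=|W|$. Deleting $W$ lowers $\tau$ by at most $w$, so $\tau(G-W)\ge\tau(G)-w$ and the induction hypothesis yields $|E(G-W)|\ge\phi\bigl(n,\tau(G-W)\bigr)\ge\phi\bigl(n,\tau(G)-w\bigr)$. Splitting the edges of $G$ into those inside $W$, those inside $V(G)\setminus W$, and the crossing ones,
\[
|E(G)|\ \ge\ |E(H)|+\bigl|E_G(W,V(G)\setminus W)\bigr|+\phi\bigl(n,\tau(G)-w\bigr),
\]
and the whole content of the lemma is that this right‑hand side is at least $\phi(n,\tau(G))$.

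Establishing that last inequality is the main obstacle. The estimate $\tau(G-W)\ge\tau(G)-w$ is wildly lossy for sparse critical subgraphs: a long odd cycle (the case $n=4$) has $w$ vertices but drops $\tau$ by only $1$, so charging it $w$ units of $\phi$ against its mere $\sim(n-2)w/2$ internal edges cannot be afforded once $n\ge 4$ unless the crossing edges $|E_G(W,V(G)\setminus W)|$ are brought in; conversely, for the extremal configurations — disjoint cliques $K_{n+s_i-2}$ — it is exactly the crossing edges that make the bound tight, the internal edges alone being far too few. Thus the induction must track internal and crossing edges together and weigh them against the true drop in $\tau$: concretely, if deleting $W$ drops $\tau$ by $d$, one needs $|E(H)|+\bigl|E_G(W,V(G)\setminus W)\bigr|\ge\phi(n,\tau(G))-\phi(n,\tau(G)-d)$, which is proved using the convexity/subadditivity facts about $\phi$, the Gallai edge bounds for $H$, and — in the regime where $d$ is large, hence where many crossing edges are forced — a direct count of those crossing edges. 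Arranging the choice of $H$ (minimum order, or a clique when one is present) so that these contributions balance in every case is the delicate point, and is where the hypothesis $n\ge 3$ enters.

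Part (ii) follows the same scheme, with $g(n,t)$ in place of $\hat g(n,t)$ and with the matching number $\nu(G[S])$ of the deleted set playing the role of $|S|$: one shows that if every $S$ with $\chi(G-S)\le n-2$ satisfies $\nu(G[S])\ge t$, then $|E(G)|\ge g(n,t)$. Switching from $\hat g$ to $g$ is forced because the extremal parts for $g$ are the cliques $K_{n+2s-2}$, each of which pushes an $s$‑matching into $S$, whereas a deleted $K_{n-1}$ contributes a single vertex to $S$ and nothing to a matching there. Finally, for $n\ge 4$ one upgrades to the cardinality conclusion $|S|\le 2t$ by combining the set produced in part (i) with \pref{lem:diff}, which bounds $g(n,t)-\hat g(n,t)$ by $(n-3)/2$ and so allows the estimate of part (i) to be invoked under the weaker hypothesis $|E(G)|<g(n,t)$ — an upgrade that genuinely fails for $n\le 3$, consistently with \pref{lem:diff}.
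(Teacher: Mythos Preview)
Your inductive scheme has a genuine gap at precisely the step you flag as ``the delicate point'': the inequality
\[
|E(H)|+\bigl|E_G(W,V(G)\setminus W)\bigr|\ \ge\ \phi\bigl(n,\tau(G)\bigr)-\phi\bigl(n,\tau(G)-d\bigr)
\]
can fail for \emph{every} choice of the $(n-1)$-critical subgraph $H$, already at an extremal instance. Take $n=5$ and $G=K_6$. Then $\tau(G)=3$ and $\phi(5,3)=\binom{6}{2}=15=|E(G)|$, so the bound must hold with equality. The $(n-1)$-critical subgraph of minimum order is $H=K_4$; deleting it leaves $G-W=K_2$, whence $\tau(G-W)=0$ and $d=3$, and your displayed inequality reads $6+8\ge 15$, which is false. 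Any $4$-critical $H$ on six vertices (e.g.\ the wheel $W_5$) fares worse: then $G-W=\emptyset$, $d=3$, and $|E(H)|\le 10<15$. There is no $4$-critical graph on five vertices, so no choice of $H$ works. The missing edge is the one inside $G-W=K_2$, which the inductive hypothesis $|E(G-W)|\ge\phi(5,0)=0$ discards; the invariant ``$|E|\ge\phi(n,\tau)$'' is too weak to propagate, and neither the Gallai bound on $|E(H)|$ nor the crossing-edge count can recover that loss. Your sketch for part~(ii) inherits the same defect, and the suggested route to $|S|\le 2t$ via part~(i) and \pref{lem:diff} does not go through either: from $|E(G)|<g(n,t)\le\hat g(n,t)+(n-3)/2$ you cannot conclude $|E(G)|<\hat g(n,t)$, which is what part~(i) requires.

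The paper's proof is not inductive. It fixes a proper $\chi(G)$-colouring $C_1,\dots,C_f$ chosen to \emph{maximise} $\sum_i|C_i|^2$, sets $S=\bigcup_{i\ge n-1}C_i$, and observes that this extremality forces every vertex of $C_j$ to have a neighbour in each $C_i$ with $i<j$. Reading off the ``column lengths'' $s_i=f_i-(n-2)$ of the class-size profile then gives $|E(G)|\ge\sum_i\binom{n+s_i-2}{2}\ge\hat g(n,t)$ as soon as $|S|\ge 2t$, proving~(i) globally in one stroke. For~(ii) the same colouring is analysed further (four structural claims) to show that $G[S]$ is spanned by vertex-disjoint cliques of sizes $s_1,\dots,s_l$, with too few extra edges to connect all odd-order components into a perfect matching; the bound $|S|\le 2t$ falls out of this analysis directly.
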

\begin{proof}

First let $ n=3 $. In this case, let $ S $ be a minimum vertex cover for $ G $ (a set of vertices with minimum size containing at least one endpoint of each edge of $G$). The set $ V(G)\setminus S $ is a stable set (a set of pairwise nonadjacent vertices) in $ G $  and thus, $ \chi(G-S)\leq 1=n-2 $. If $|E(G)|<\hat{g}(3,t)=2t$, then $ |S|\leq |E(G)|\leq 2t-1 $ and thus, the case (i) is proved.  Now, suppose that $ |E(G)|< g(3,t)=3t $.  Then, by the minimality of $S$, every vertex in $ S $ has a neighbor in $ V(G)\setminus S $ and thus $ G[S] $ does not contain $ tK_2 $ (otherwise, $|S|\geq 2t$ and we have $ |E(G)|\geq 2t+t=3t $). This proves (ii) for $ n=3 $. So, henceforth suppose that $ n\geq 4 $.

Let $ \chi(G)=f $. If $ f\leq n-2$, then setting $ S=\emptyset $, we are done. So, suppose that $ f\geq n-1 $.
%(i) We have
%\[ \binom{\chi(G)}{2}\leq |E(G)|< \hat{g}(n,t)\leq \binom{n-2+2t}{2}. \]
%Therefore, $ f<n-2+2t $.
Now, among all proper vertex colorings of $G$ with $ f $ colors, consider the coloring with the color classes $ C_1,\ldots, C_{f} $ which maximizes $ \sum_{i} |C_i|^2 $. Also, without loss of generality, assume that $$|C_1|\geq |C_2|\geq \cdots \geq |C_{f}|.$$ Then, we have the following claim.\\

\noindent
\textbf{Claim 1.} For every $i<j$, every vertex in $ C_j $ has a neighbor in $ C_i $. \\

\noindent
For the contrary, assume that there is a vertex $ v\in C_j $ with no neighbor in $ C_i $. Then, transfer $ v $ from $ C_j $ to $ C_i $ to obtain a new proper coloring with color classes $ C'_1,\ldots, C'_f $, where
\[\sum_{k} |C'_k|^2=\sum_{k} |C_k|^2+2+ 2|C_i|-2|C_j|>\sum_{k} |C_k|^2, \]
a contradiction with the choice of the coloring. This proves Claim~1.\\

Now, let $S=\cup_{i=n-1}^{f} C_i $. Obviously, $ \chi(G-S)=n-2 $. Thus, if $ |S|\leq 2t-1 $, then we are done. So, assume that $ |S|\geq 2t $.
% and since $ f\leq n-3+2t $, we have $ |C_{n-1}|\geq 2 $.
Let $ l=|C_{n-1}| $ and for each $ 1\leq i\leq l $, define $ f_i $ to be the maximum number $ k $ for which $ |C_{k}|\geq i $. Also, for each $ 1\leq i\leq l $, define $ s_i=f_i-(n-2) $. Then, $ |S|=s_1+\cdots+s_l\geq 2t $.
Now, define
\[N(f_1,\ldots, f_l) = l(1+\cdots+(f_l-1))+ (l-1) (f_l+\cdots+(f_{l-1}-1)) + \cdots+ (f_2+\cdots+(f_1-1)). \]
Thus,
\[ N(f_1,\ldots, f_l) = \sum_{i=1}^l \binom{f_i}{2} = \sum_{i=1}^l \binom{n+s_i-2}{2} \geq \hat{g}(n,t). \]
	
Also, by Claim~1, we have $|E(C_i,\cup_{j<i} C_j)|\geq |C_i|(i-1)$, where by $E(A,B)$ we mean the set of all edges between $A$ and $B$. On the other hand,

\begin{equation*}
%\label{eq:N}
N(f_1,\ldots, f_l)= l(1+\cdots+(f_l-1))+\sum_{i=f_l+1}^{f}|C_i|(i-1)\leq \sum_{i=2}^{f}|C_i|(i-1).
\end{equation*}

Therefore,
$$|E(G)|=\sum_{i=2}^{f}|E(C_i,\cup_{j<i} C_j)|\geq \sum_{i=2}^{f}|C_i|(i-1)\geq N(f_1,\ldots,f_l) \geq \hat{g}(n,t). $$ This proves (i).
	
Now, in order to prove (ii), suppose that $ |E(G)|< g(n,t) $. Moreover, note that if $ |E(G)|< \hat{g}(n,t) $, then the result follows from (i). So, suppose that $ |E(G)| = \hat{g}(n,t)+\varepsilon $, where $ \varepsilon\geq 0 $. By \pref{lem:diff}, we have $ 0\leq \varepsilon < (n-3)/2 $.
First, suppose that $ |S|= s_1+\cdots+s_l>2t $. Then,
\begin{align*}
|E(G)|& \geq  N(f_1,\ldots,f_l) = \sum_{i=1}^l \binom{n+s_i-2}{2} \\ & \geq \sum_{i= 2}^l \binom{n+s_i-2}{2}+\binom{n+s_1-3}{2}+ {n-2} \geq \hat{g}(n,t)+{n-2}>  \hat{g}(n,t)+\varepsilon,
\end{align*}
which is in contradiction with our assumption $|E(G)| = \hat{g}(n,t)+\varepsilon$. Thus, $ |S|= s_1+\cdots+s_l=2t $. In the sequel, we are going to prove that $ G[S] $ does not contain $ tK_2 $. For this, first we prove some facts, as follows. \\

\textbf{Claim 2.} For every $ (n-1)/2\leq i\leq n-1  $, we have $ |C_i|= l $ and there is some $ (n-1)/2\leq i_0\leq n-1  $ such that every vertex in $ \cup_{j=n}^f C_j $ has a unique neighbor in $ C_{i_0} $. \\

If $ |C_h|\geq l+1 $, for some $ (n-1)/2\leq h\leq n-1 $, then by Claim~1, we have
\begin{align*}
|E(G)|&=\sum_{i=2}^{f}|E(C_i,\cup_{j<i} C_j)| \geq \sum_{i=2}^{f}|C_i|(i-1) \\
&\geq (l+1)(1+\cdots+(h-1))+l(h+\cdots+(f_l-1))+\sum_{i=f_l+1}^{f}|C_i|(i-1) \\
& = N(f_1,\ldots, f_l)+\binom{h}{2} \geq \hat{g}(n,t)+\frac{n-3}{2}\geq g(n,t),
\end{align*}
a contradiction. To see the second fact, for the contrary, suppose that for every $ (n-1)/2\leq i\leq n-1  $, there is a vertex in $ \cup_{j=n}^f C_j $ with at least two neighbors in $ C_i $. Then,  by Claim~1, we have
\begin{align*}
|E(G)|&=\sum_{i=2}^{f}|E(C_i,\cup_{j<i} C_j)| \geq \sum_{i=2}^{f}|C_i|(i-1)+ \frac{n-2}{2} \\
& \geq N(f_1,\ldots, f_l)+ \frac{n-2}{2}\geq \hat{g}(n,t)+\frac{n-2}{2},
\end{align*}
again, a contradiction. This proves Claim~2.

Now, due to Claim~2, there is some $ (n-1)/2\leq i_0\leq n-1  $ such that every vertex in $ \cup_{j=n}^f C_j $ has a unique neighbor in $ C_{i_0} $. Also, all sets $ C_{\lceil (n-1)/2\rceil }, \ldots, C_{n-1} $ are of the same size and so are interchangeable. Thus, without loss of generality, we assume that $ i_0=n-1 $, i.e. every vertex in $ \cup_{j=n}^f C_j $ has a unique neighbor in $ C_{n-1} $. \\

\noindent
\textbf{Claim 3.} Every vertex in $ C_{n-1} $ has at most one neighbor in $ C_i $, for every $ n\leq i\leq f $. \\

Let $ x $ be a vertex in $ C_{n-1} $ and $ X $ be the set of all neighbors of $ x $ in $ C_i $, for some $ n\leq i\leq f $. By the above assumption, every member of $ X $ has no neighbor in $ C_{n-1}\setminus \{x\} $.  Now, if $ |X|\geq 2 $, then move $ x $ from $ C_{n-1} $ to $ C_i $ and move all members of $ X $ from $ C_i $ to $ C_{n-1} $. This increases the summation $ \sum_{k} |C_k|^2 $, a contradiction with the choice of the coloring. Thus, $ |X|\leq 1 $ and this proves Claim 3. \\

\noindent
\textbf{Claim 4.} Let $ x\in C_i $, for some $ n+1\leq i\leq f $ and $ y $ be the unique neighbor of $ x $ in $ C_{n-1} $. Then, for every $ n\leq j \leq i-1 $, the vertices $ x $ and $ y $ have a unique common neighbor in $ C_j $. \\

Let $ j $ be an integer where $ n\leq j \leq i-1 $. By Claim~3, $ x $ and $ y $ have at most one common neighbor in $ C_j $. Now, suppose that $ x $ and $ y $ have no common neighbor in $ C_j $ and let $ X $ be the set of neighbors of $ x $ in $ C_j $. By the above assumption, every vertex in $ X $ has a unique neighbor in $ C_{n-1} \setminus \{y\} $ and by Claim~3, these neighbors are distinct. Let $ Y $ be the set of all neighbors of $ X $ in $ C_{n-1} $. Thus, $ |Y|=|X| $ and $ x $ has no neighbor in $ Y $.  Also, by Claim~3, every member of $ Y $ has no neighbor in $ C_j\setminus X $. Now, move all members of $ X $ from $ C_j $ to $ C_{n-1} $, move all members of $ Y $ from $ C_{n-1} $ to $ C_j $ and move $ x $ from $ C_i $ to $ C_j $. This gives a new coloring for $ G $ which increases the summation $ \sum_{k} |C_k|^2 $, a contradiction with the choice of the coloring. This proves Claim~4. \\

\textbf{Claim 5.} The graph $G[S]$ (the induced subgraph of $ G $ on $S$) has the union of $l$ vertex-disjoint cliques of sizes $ s_1,\ldots, s_l $ as a spanning subgraph.\\

Let $ C_f=\{x_1,\ldots,x_k \} $ be the last color class. Then, $ f_1=\cdots=f_k=f  $. Now, every vertex $ x_i $ has a unique neighbor $ y_i $ in $ C_{n-1} $. By Claim~3, the vertices $ y_1,\ldots, y_k $ are distinct. Also, by Claim~4, the vertices $ x_i$ and $y_i $ have a unique common neighbor $ z^j_{i} $ in $ C_j $, for every $ n\leq j\leq f-1 $. Define, $ K_i=\{z_i^n, \ldots, z_i^{f-1}\}\cup \{x_i,y_i\} $, $ 1\leq i\leq k $. Then, $ K_i $ is a clique of $ G $. To see this, on the contrary, suppose that $ z_i^{j} $ is nonadjacent to $ z_i^{j'} $, for some $ n\leq j<j'\leq f-1 $. Thus, by Claim~4, $ z_i^{j'} $ and $ y_i $ have a common neighbor in $ C_j $ and thus $ y_i $ has two neighbors in $ C_j $, which is in contradiction with Claim~3. Hence, $ K_i $ is a clique of $ G $.  Also, $ |K_1|=\cdots=|K_k|=s_1=\cdots=s_k $ and the vertices of the cliques $ K_1,\ldots, K_{k} $ are disjoint, since otherwise there is a vertex in $ \cup_{i=n}^f C_i $ with two neighbors in $ C_{n-1} $ or a vertex in $C_{n-1}$ with two neighbors in $C_j$ for some $n\leq j\leq f$. Now, we can remove the vertices of the cliques $ K_1,\ldots, K_k  $ from $ G $ and continue with  the color class $ C_{f_{k+1}} $ with similar arguments. This proves Claim~5. \\

Now, let $O=(r_1,\ldots, r_{2q})$ be the sequence of all odd integers in the sequence $ I=(s_1,\ldots, s_l)$, where $ r_1\geq r_2\geq \cdots\geq r_{2q} $. Also, let $I'=(s'_1,\ldots, s'_l) $ be the sequence of even integers formed by concatenation of the sequences $I\setminus O $ and $J=(r_1-1,\ldots,r_q-1,r_{q+1}+1,\ldots, r_{2q}+1)$, i.e. $I'$ is the sequence obtained from $I$ by replacing the elements in $O$ with the elements in $J$. Then, clearly $ s'_1+\cdots+s'_l=s_1+\cdots+s_l=2t $ and we have
\[|E(G)|< g(n,t)\leq \sum_{i=1}^{l}\binom{n-2+s'_i}{2}\leq \sum_{i=1}^{l}\binom{n-2+s_i}{2}+q = N(f_1,\ldots, f_l)+q. \]
Also, by Claim~5, $G[S]$ contains the union of $l$ vertex-disjoint cliques of sizes $ s_1,\ldots, s_l $ as a spanning subgraph (where $ 2q $ of them are of odd size). Let $ H $ be the subgraph of $ G $ which is obtained from $ G[S]$ by removing all edges of these  $ l $ cliques. Therefore,

$$|E(H)|+N(f_1,\ldots, f_l) \leq |E(H)|+\sum_{i=2}^{f}|C_i|(i-1)\leq \sum_{i=2}^{f}|E(C_i,\cup_{j<i} C_j)|=|E(G)|.$$
Hence, $|E(H)|<q$. This implies that $ G[S] $ contains a complete graph of odd size as a connected component. Consequently, $ G[S] $ does not contain a perfect matching. This completes the proof.
\end{proof}
\begin{rem}
Note that the bounds in Lemma~\ref{lem:chi} are best possible, in the sense that for every integers $ n\geq 4, t\geq 1 $, there is a graph $ G $ with $ |E(G)|=\hat{g}(n,t) $ such that for every $ S\subseteq V(G) $ with $ |S|\leq 2t-1 $, we have $ \chi(G-S)\geq n-1 $. Also, there is a graph $ G $  with $ |E(G)|=g(n,t) $ such that for every $ S\subseteq V(G) $ with $ |S|\leq 2t $, if $ \chi(G-S)\leq n-2 $, then $ G[S] $ contains $ tK_2 $. To see this, let $ s_1,\ldots, s_l $ be the integers achieving $ \hat{g}(n,t) $ (resp. $ g(n,t) $) and simply let $ G $ be the disjoint union of the complete graphs $ K_{n+s_1-2}, \ldots, K_{n+s_l-2} $ (resp. $ K_{n+2s_1-2}, \ldots, K_{n+2s_l-2} $).
\end{rem}
Now, we are ready to prove \pref{thm:main}.
\begin{proof}[{\rm \textbf{Proof of \pref{thm:main}.}}]
%To prove the upper bound, let $ G $ be the disjoint union of the complete graphs $ K_{n-2+2s_1}, \ldots, K_{n-2+s_l} $, for some positive integers $ s_i $ where $ s_1+\cdots+s_l=t $. Since $ R(K_n,s_iK_2)= n-2+2s_i $, for every two coloring of $ K_{n-2+2s_i} $, there is either a blue $ K_n $, or a red $ s_iK_2 $.  Therefore, in every two coloring of $ G $, there is either a blue $ K_n $, or a red $ tK_2 $. Hence $ \hat{R}(K_n,tK_2)\leq g(n,t) $.
Suppose that $ G$ is a graph such that $ \chi(G)\geq n $ and $ H $ is a graph with no isolated vertex, where $ |V(H)|\geq 2t $. Also, let $ F $ be a graph such that $ F\rightarrow (G,H) $.
First, note that $ \hat{g}(2,t)=0 $ and $ g(2,t)=t $. So, if $ n=2 $ and $ H $ contains $ tK_2 $, then $ G $ contains at least one edge and thus, $ |E(F)|\geq t=g(2,t) $. This proves the theorem for $ n=2 $.
Now, suppose that $ n\geq 3 $. We are going to prove that $  |E(F)|\geq  \hat{g}(n,t)$. For the contrary, suppose that $ |E(F)|< \hat{g}(n,t) $. Thus, by Lemma~\ref{lem:chi}(i), there exists $S \subseteq V(F) $ such that $ |S|\leq 2t-1 $ and $ \chi(F-S)\leq n-2 $. Now, color all edges whose both ends are in $ S $ by red and the other edges by blue. Since $ |S|\leq 2t-1 $, there is no red copy of $ H $ in $ F $. Therefore, since $ F\rightarrow (G,H) $, there is a blue copy of $ G $ in $ F $. Then, $ \chi(G-S)\leq \chi(F-S)\leq n-2 $. Also, since all edges of $ F $ with both ends in $ S $ are red, $S $ is a stable set in $ G $. Therefore, $ \chi(G)\leq n-1 $, which is a contradiction. This shows that $ \hat{R}(G,H)\geq \hat{g}(n,t)$. Moreover, if $ H $ contains $ tK_2 $, then the same argument using Lemma~\ref{lem:chi}(ii) implies that $ \hat{R}(G,H) \geq g(n,t) $.
\end{proof}

\section{Some consequences}
In this section, we provide some implications of \pref{thm:kng2}. First, in the following theorem, we generalize \pref{thm:erdos1} and we give a necessary and sufficient condition for \eqref{eq:Kn} being hold.
\begin{thm} \label{thm:nlarge}
For every two integers $ n\geq 2, t\geq 1 $, we have $ \hat{R}(K_n,tK_2)= \binom{n+2t-2}{2} $, if and only if either $ t^2\leq \binom{n-2}{2} $ and $ t $ is even, or $ t^2\leq \binom{n-2}{2}+1 $ and $ t $ is odd.
\end{thm}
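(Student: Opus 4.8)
The plan is to reduce everything to the formula $\hat{R}(K_n,tK_2)=g(n,t)$ from \pref{thm:kng2} and then analyze when the single-block choice $l=1$, $s_1=t$ (which gives the value $\binom{n+2t-2}{2}$) is optimal in the minimization defining $g(n,t)$. So the equality $\hat{R}(K_n,tK_2)=\binom{n+2t-2}{2}$ holds precisely when
\[
\binom{n+2t-2}{2}=\min\left\{\sum_{i=1}^l\binom{n+2s_i-2}{2}: s_i\in\mathbb{Z}^+,\ s_1+\cdots+s_l\geq t\right\}.
\]
First I would observe that since $\binom{n+2s-2}{2}$ is increasing in $s$, the minimizing sequence satisfies $s_1+\cdots+s_l=t$ exactly, and it suffices to compare the $l=1$ value against all splittings of $t$. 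Moreover, by a standard convexity/exchange argument (replacing $s_i,s_j$ with $s_i+s_j$ can only help once $n\geq 2$, because $\binom{a+b+c}{2}\geq \binom{a+c}{2}+\binom{b+c}{2}$ type inequalities), I expect that the only candidate splitting that can beat $l=1$ is splitting into $t$ singletons, i.e. $t$ copies of $K_n$, giving $t\binom{n}{2}$; or possibly into two nearly-equal parts. I would make this precise: it is enough to show that $\binom{n+2t-2}{2}\le \binom{n+2a-2}{2}+\binom{n+2(t-a)-2}{2}$ for all $1\le a\le t-1$ forces (and is forced by) the stated arithmetic condition, and that among all two-part splits the worst is the balanced one ($a=\lfloor t/2\rfloor$) when $t$ is even, and the near-balanced one when $t$ is odd — this asymmetry between even and odd $t$ is exactly the source of the two cases.

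The key computation is therefore: expand $\binom{n+2t-2}{2}-\binom{n+2a-2}{2}-\binom{n+2(t-a)-2}{2}$ as a polynomial in $a$, $t$, $n$. Writing $m=n-2$, a direct expansion gives something of the form $\tfrac12\big[(m+2t)(m+2t-1)-(m+2a)(m+2a-1)-(m+2(t-a))(m+2(t-a)-1)\big]$, which simplifies to $4a(t-a)-\tbinom{m}{2}\cdot(\text{const})$; the cross term $4a(t-a)$ is maximized over integers at $a=t/2$ (value $t^2$) if $t$ is even and at $a=(t\pm1)/2$ (value $t^2-1$) if $t$ is odd. Requiring the whole expression to be $\le 0$ for every valid $a$ then becomes: $t^2\le \binom{n-2}{2}$ for even $t$, and $t^2-1\le\binom{n-2}{2}$, i.e. $t^2\le\binom{n-2}{2}+1$, for odd $t$ — matching the statement. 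I would also separately verify that the $t$-singleton split $t\binom{n}{2}$ is never the unique minimizer strictly below $\binom{n+2t-2}{2}$ without a two-part split also being $\le$ it (so checking two-part splits is genuinely sufficient); this follows because any split into $\ge 3$ parts can be merged step-by-step into a two-part split without increasing the sum, using the same convexity inequality.

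The main obstacle I anticipate is twofold. First, I must be careful that the optimization is over sequences with sum $\ge t$, not $=t$, and that $l$ is unbounded — but monotonicity of $\binom{n+2s-2}{2}$ in $s$ handles the former and the merging argument handles the latter, so this is routine once stated cleanly. Second, and more delicate, is pinning down the "worst split" rigorously: I need that if $\binom{n+2t-2}{2}\le\binom{n+2a-2}{2}+\binom{n+2(t-a)-2}{2}$ holds for the (near-)balanced $a$, then it holds for all $a$. Since the relevant quantity $\binom{n+2t-2}{2}-\binom{n+2a-2}{2}-\binom{n+2(t-a)-2}{2}$ equals $4a(t-a)-\binom{n-2}{2}$ (up to a harmless constant I will compute exactly), and $a(t-a)$ is unimodal in $a$, the maximum over integer $a\in\{1,\dots,t-1\}$ is attained at the (near-)balanced value, so the single inequality at that point is equivalent to all of them. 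Assembling these pieces: equality holds iff $4\cdot\max_a a(t-a)\le \binom{n-2}{2}+(\text{const})$, which unwinds to exactly the even/odd dichotomy claimed. Finally I would double-check consistency with \pref{thm:erdos1} ($n\ge 4t-1$ should satisfy the condition, since then $\binom{n-2}{2}\ge\binom{4t-3}{2}\gg t^2$) and with \pref{thm:erdos2} ($t=2$: the condition $t^2=4\le\binom{n-2}{2}$ means $n-2\ge 3$... wait, $\binom{4}{2}=6\ge 4$ needs $n-2\ge 4$ — let me instead just confirm $\binom{n-2}{2}\ge4\iff n\ge 6$ since $\binom{4}{2}=6$, $\binom{3}{2}=3$; this matches the threshold $n\ge 6$ in \pref{thm:erdos2}), which serves as a sanity check on the constant.
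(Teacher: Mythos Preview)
Your approach is essentially the paper's: reduce via \pref{thm:kng2} to $g(n,t)$, use the merge/split identity (your computation gives exactly $\binom{n+2t-2}{2}-\binom{n+2a-2}{2}-\binom{n+2(t-a)-2}{2}=4a(t-a)-\binom{n-2}{2}$, no stray constant), and identify the (near\nobreakdash-)balanced two-block split as extremal, recovering the even/odd dichotomy. The one place to tighten is the merging step for $l\ge 3$: merging parts $s_i,s_j$ is \emph{not} a pure convexity fact---it fails to increase the sum iff $4s_is_j\le\binom{n-2}{2}$---so you cannot merge unconditionally; but under the hypothesis you are proving, any two parts of a partition of $t$ satisfy $4s_is_j\le(s_i+s_j)^2\le t^2\le\binom{n-2}{2}$ (with the parity adjustment when $t$ is odd), and then the reduction goes through exactly as the paper does it (take an optimizer with minimal $l$ and merge the two smallest parts).
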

\begin{proof}
By \pref{thm:kng2}, it is enough to prove that $ g(n,t)=\binom{n+2t-2}{2} $ if and only if $ t^2\leq \binom{n-2}{2} $, when $ t $ is even and $ t^2\leq \binom{n-2}{2}+1 $, when $ t $ is odd.
Let $ I=(s_1,\ldots, s_l) $ be a list of even integers such that $$ s_1\geq \cdots\geq s_l,~~ s_1+\cdots+s_l=2t,~~ g(n,t)= \sum_{i=1}^{l} \binom{n+s_i-2}{2}. $$ Also, suppose that $ l $ is minimum subject to these conditions. One can easily check that for every two integers $ a,b $,
\begin{equation} \label{eq:replace}
\binom{n+a-2}{2}+\binom{n+b-2}{2} \geq \binom{n+a+b-2}{2} \quad \text{if and only if} \quad ab\leq \binom{n-2}{2}.
\end{equation}
First, assume that either $ t^2\leq \binom{n-2}{2} $ and $ t $ is even, or $ t^2\leq \binom{n-2}{2}+1 $ and $ t $ is odd. We are going to prove that $ g(n,t)= \binom{n+2t-2}{2}$. If $ l\geq 2 $, then $ s_{l-1}+s_{l}\leq 2t $ and thus, $ s_{l-1}s_{l} \leq \binom{n-2}{2} $. So, by \eqref{eq:replace}, one can replace $ s_{l-1} $ and $ s_l $ in  $ I $ with $ s_{l-1}+s_l $ to obtain the new sequence $I'=(s'_1,\ldots, s'_{l-1}) $, where $$\sum_{i=1}^{l-1} \binom{n+s'_i-2}{2}\leq  \sum_{i=1}^{l} \binom{n+s_i-2}{2}=g(n,t).$$
By the definition of $g(n,t)$, we have $\sum_{i=1}^{l-1} \binom{n+s'_i-2}{2}=g(n,t)$, which is in contradiction with the minimality of $ l $. Therefore, $ l=1 $ and $ g(n,t)= \binom{n+2t-2}{2} $.\\

To see the other direction, first, suppose that $ t^2> \binom{n-2}{2} $ and $ t $ is even. Then, by \eqref{eq:replace}, we have $$ g(n,t)\leq 2\binom{n+t-2}{2}< \binom{n+2t-2}{2}. $$ Finally, suppose that  $ t^2 >\binom{n-2}{2}+1 $ and $ t $ is odd. Then, again  by \eqref{eq:replace}, we have $$ g(n,t)\leq \binom{n+t-3}{2}+\binom{n+t-1}{2}< \binom{n+2t-2}{2}. $$
This completes the proof.
\end{proof}

%
%\begin{thm} \label{thm:nlarge}
%Suppose that $ n\geq 2, t\geq 1 $ are two integers. Also, let $ k\geq 1 $ be an integer such that $ \binom{k}{2} \binom{n-2}{2} <t^2\leq \binom{k+1}{2} \binom{n-2}{2} $. Then,
%$$ \hat{R}(K_n,tK_2)= (k-r) \binom{n-2+2q}{2}+r\binom{n+2q}{2},  $$
%where $ q=\lfloor \frac{t}{k} \rfloor $ and  $ r=t-kq $.
%\end{thm}
%
%
%\begin{cor}
%For every two integers $ n\geq 4, t\geq 1 $, if $ t\geq (n-3)(n-2)/2 $, then $ \hat{g}(n,t)= l\binom{2n-5}{2} +r (2n-5) $, where $ l= \lfloor 2t/ (n-3)\rfloor $ and $ r= 2t \mod (n-3) $.
%\end{cor}

As another consequence of \pref{thm:kng2}, we are going to give a positive answer to \pref{ques:erdos}. 
Erd\H{o}s and Faudree in \cite{EF} defined
\[ \hat{r}_{_\infty} (K_n)=\lim_{t\to \infty} \frac{\hat{R}(K_n,tK_2)}{t\, \hat{R}(K_n,K_2)},
\]
and
\[M_n= \min\left\{\dfrac{\binom{n+2t-2}{2}}{t\binom{n}{2}}\mid t\in \mathbb{N}\right\},\]
and questioned if $  \hat{r}_{_\infty}(K_n)=M_n $, for every positive integer $ n $? (They showed that the equality holds for $ n=1,2,3,4 $.) By a little computation, we can see that for every integer $ n\geq 4 $,
\[M_n= \dfrac{4(2n-5)}{n(n-1)}.\]

In order to answer \pref{ques:erdos}, we need the following corollary of \pref{thm:kng2} which provides tight lower and upper bounds for $ \hat{R}(K_n,tK_2) $.
\begin{lem} \label{lem:lower}
	For every positive integers $ n\geq 3 $ and $ t\geq 1 $, we have
\[
2t(2n-5)\leq \hat{R}(K_n,tK_2)\leq 
\begin{cases}
	\lceil 2t/ (n-2)  \rceil {(n-2)(2n-5)} & n \text{ is even,} \\[2mm]
	\lceil 2t/ (n-3)  \rceil (n-3)(2n-5) &  n \text{ is odd.}
\end{cases}
\]
\end{lem}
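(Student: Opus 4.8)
The plan is to invoke \pref{thm:kng2}, which identifies $\hat{R}(K_n,tK_2)$ with $g(n,t)$, so it suffices to prove $2t(2n-5)\leq g(n,t)$ together with the two stated upper bounds on $g(n,t)$. Both will follow from elementary estimates on a single binomial term $\binom{n+2s-2}{2}$.

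For the lower bound the crucial point is the term-wise inequality
$$\binom{n+2s-2}{2}\geq 2s(2n-5)\qquad\text{for every integer }s\geq 1.$$
Writing $m=n-2$, one has $2\binom{m+2s}{2}-4s(2m-1)=(m+2s)(m+2s-1)-4s(2m-1)=(m-2s)^2-(m-2s)=(m-2s)(m-2s-1)$, a product of two consecutive integers, hence $\geq 0$; this proves the displayed inequality. Now if $s_1,\dots,s_l$ is any sequence of positive integers with $s_1+\cdots+s_l\geq t$, summing gives $\sum_{i=1}^l\binom{n+2s_i-2}{2}\geq 2(2n-5)\sum_{i=1}^l s_i\geq 2t(2n-5)$, and minimizing over all such sequences yields $g(n,t)\geq 2t(2n-5)$.

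For the upper bound I would test the definition of $g(n,t)$ on a single well-chosen sequence. Put $s^\ast=(n-2)/2$ when $n$ is even and $s^\ast=(n-3)/2$ when $n$ is odd, noting $s^\ast\geq 1$ as soon as $n\geq 4$, and let $l=\lceil t/s^\ast\rceil$, so the constant sequence of length $l$ with all entries equal to $s^\ast$ has sum $ls^\ast\geq t$ and is admissible. Since $m-2s^\ast\in\{0,1\}$ for this $s^\ast$, the term-wise inequality above is in fact an equality, i.e. $\binom{2n-4}{2}=(n-2)(2n-5)$ for $n$ even and $\binom{2n-5}{2}=(n-3)(2n-5)$ for $n$ odd. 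Hence $g(n,t)\leq l\binom{n+2s^\ast-2}{2}=2ls^\ast(2n-5)$, and noting that $l=\lceil t/s^\ast\rceil$ equals $\lceil 2t/(n-2)\rceil$, resp. $\lceil 2t/(n-3)\rceil$, gives exactly the claimed bounds. The value $n=3$ is the only remaining case; here the upper-bound formula degenerates, but the identity $g(3,t)=3t$ recorded after \pref{lem:diff} makes both the lower bound $2t\leq 3t$ and the correct upper bound immediate, so the odd-$n$ formula is to be read for $n\geq 5$.

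Since each step is a short computation, there is no genuine obstacle; the only thing that takes a moment is spotting the factorization $(m+2s)(m+2s-1)-4s(2m-1)=(m-2s)(m-2s-1)$, which simultaneously delivers the term-wise lower bound and its tightness at $s=s^\ast$, after which only the ceiling bookkeeping and the trivial case $n=3$ remain.
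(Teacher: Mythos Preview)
Your proof is correct. The upper bound argument is essentially the paper's: both test the definition of $g(n,t)$ on the constant sequence with $s_i=\lfloor (n-2)/2\rfloor$, and you are slightly more careful in flagging the degenerate case $n=3$.

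The lower bound, however, is genuinely different. The paper first argues that an optimal sequence $(s_1,\dots,s_l)$ has all entries in $\{q,q+1\}$ for some $q$, rewrites $g(n,t)$ as $t$ times a convex combination of $f_n(q)$ and $f_n(q+1)$ with $f_n(x)=\tfrac1x\binom{n+2x-2}{2}$, and then minimizes $f_n$ over positive integers by computing $f_n'$ and checking endpoints. You bypass all of this with the single term-wise inequality $\binom{n+2s-2}{2}\geq 2s(2n-5)$, proved via the neat factorization $(m+2s)(m+2s-1)-4s(2m-1)=(m-2s)(m-2s-1)$ with $m=n-2$. This is more elementary (no structural analysis of the optimizer, no calculus), and it has the pleasant feature that equality at $m-2s\in\{0,1\}$ simultaneously certifies the upper bound, so both directions fall out of one identity. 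The paper's route, on the other hand, makes visible the location of the continuous minimizer of $f_n$ and would generalize more readily if one wanted sharper asymptotics.
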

\begin{proof}
	To prove the right inequality, note that by \pref{thm:kng2}, we have
	\begin{align*} %\label{eq:upper}
	\hat{R}(K_n,tK_2)&=g(n,t)\leq \lceil t/\lfloor (n-2)/2\rfloor  \rceil \binom{n-2+2\lfloor (n-2)/2\rfloor }{2} \nonumber  \\
	&=\begin{cases}
	\lceil 2t/ (n-2)  \rceil {(n-2)(2n-5)} & n \text{ is even,} \\[2mm]
	\lceil 2t/ (n-3)  \rceil (n-3)(2n-5) &  n \text{ is odd.}
	\end{cases}
	\end{align*}
	Now, to prove the left inequality, suppose that $ I=(s_1,\ldots,s_l) $ is a list of positive integers such that $$ s_1+\cdots+s_l=t,~~ g(n,t)= \sum_{i=1}^{l} \binom{n+2s_i-2}{2}.$$
Now, note that $ |s_i-s_j|\leq 1$, for every $1\leq i,j\leq l $, since otherwise, if $ s_i\geq s_j+2  $, for some $ i,j $, one may replace $ s_i $ and $ s_j $ with $ s_i-1 $ and $ s_j+1 $ respectively and consequently reduce the summation $\sum_{i=1}^{l} \binom{n+2s_i-2}{2}$. Therefore, $ \{s_1,\ldots,s_l \}\subseteq \{q,q+1\} $, for some positive integer $ q $. Let $ r $ be the number of integers in $ I $ which are equal to $ q+1 $. Thus, $ t=lq+r $ and
	\begin{equation}\label{eq:gnt}
	g(n,t)= (l-r) \binom{n+2q-2}{2}+r\binom{n+2q}{2}.
	\end{equation}
	
	Now, for every real number $ x> 0$, let us define
	\[
	f_n(x)= \frac{1}{x} \binom{n+2x-2}{2}.
	\]
	We prove that
	\[ g(n,t)\geq t \min_{\stackrel{x\in \mathbb{Z}}{1\leq x\leq t}}f_n(x). \]
	To see this, let $ \alpha = (l-r)q/t $ and thus $ (1-\alpha) = r(q+1)/t $. Therefore, by \eqref{eq:gnt},
	\begin{align*}
	g(n,t) &= \alpha \frac{t}{q} \binom{n+2q-2}{2}+ (1-\alpha) \frac{t}{q+1} \binom{n+2q}{2} \\
	& = t (\alpha f_n(q)+ (1-\alpha) f_n(q+1)) \geq t \min_{\stackrel{x\in \mathbb{Z}}{1\leq x\leq t}}f_n(x).
	\end{align*}
	On the other hand, one may see that $ f_n'(x)= 0 $ if and only if $ 4x^2= (n-2)(n-3)$. Also, $$ n-3\leq \sqrt{(n-2)(n-3)} \leq n-2.$$ Therefore,
	\begin{align*}
	\min_{\stackrel{x\in \mathbb{Z}}{1\leq x\leq t}}f_n(x) &= \min\left\{f_n(1), f_n(t), f_n(\frac{n-3}{2}), f_n(\frac{n-2}{2})\right\}\\ & = \min\left\{\binom{n}{2}, \frac{1}{t}\binom{n+2t-2}{2}, 4n-10 \right\} = 4n-10.
	\end{align*}
	This gives the desired lower bound for $ g(n,t)= \hat{R}(K_n,tK_2) $.
\end{proof}

Now, we are ready to answer \pref{ques:erdos}.
\begin{thm}\label{thm:EFQ}
For every positive integer $ n $, we have $  \hat{r}_{_\infty}(K_n)=M_n $.
\end{thm}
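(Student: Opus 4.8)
The inequality $\hat{r}_{_\infty}(K_n)\le M_n$ is already recorded in \eqref{eq:Mn}, together with the fact (due to Erd\H{o}s and Faudree) that the limit defining $\hat{r}_{_\infty}(K_n)$ exists. So the plan is to establish only the reverse inequality $\hat{r}_{_\infty}(K_n)\ge M_n$, which I claim falls out immediately from \pref{lem:lower}. First I would dispose of the small cases: for $n\le 4$ the equality $\hat{r}_{_\infty}(K_n)=M_n$ was already proved in \cite{EF}, so I may assume $n\ge 4$, in which case $M_n=4(2n-5)/(n(n-1))$ by the computation recorded just before \pref{lem:lower}. I would also note that $\hat{R}(K_n,K_2)=g(n,1)=\binom{n}{2}$ (by \pref{thm:kng2}, taking the single term $s_1=1$ in the definition of $g$), so that the denominator in $\hat{r}_{_\infty}(K_n)$ is exactly $t\binom{n}{2}$.

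The main step is then a one-line chain of inequalities. By \pref{thm:kng2} we have $\hat{R}(K_n,tK_2)=g(n,t)$, and by \pref{lem:lower} we have $g(n,t)\ge 2t(2n-5)$ for every $t\ge 1$; hence for all $t$,
\[
\frac{\hat{R}(K_n,tK_2)}{t\,\hat{R}(K_n,K_2)}=\frac{g(n,t)}{t\binom{n}{2}}\ge\frac{2t(2n-5)}{t\binom{n}{2}}=\frac{4(2n-5)}{n(n-1)}=M_n .
\]
Passing to the limit $t\to\infty$ gives $\hat{r}_{_\infty}(K_n)\ge M_n$, and combined with \eqref{eq:Mn} this yields $\hat{r}_{_\infty}(K_n)=M_n$.

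If one wished to avoid invoking the prior existence of the limit, I would instead furnish the matching upper bound by hand: picking $t_0\in\mathbb{N}$ that attains the minimum in the definition of $M_n$, and using $l=k+1$ blocks all equal to $t_0$ in the definition of $g$ together with the monotonicity of $g(n,\cdot)$, one gets $g(n,t)\le g(n,(k+1)t_0)\le (k+1)\binom{n+2t_0-2}{2}$ whenever $kt_0\le t$; dividing by $t\binom{n}{2}\ge kt_0\binom{n}{2}$ and letting $t\to\infty$ shows $\limsup\le M_n$, which together with the lower bound above simultaneously proves that the limit exists and equals $M_n$.

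I do not expect any real obstacle here: the theorem is a direct corollary of \pref{thm:kng2} and \pref{lem:lower}. The only point that needs care is the elementary check that the lower bound $g(n,t)\ge 2t(2n-5)$ of \pref{lem:lower} is exactly tight against $M_n$ for $n\ge 4$ — equivalently, that $f_n(x)=\frac1x\binom{n+2x-2}{2}$ attains its integer minimum value $4n-10$ at $x=\lfloor (n-2)/2\rfloor$, which is precisely what the final display in the proof of \pref{lem:lower} records — and remembering that the closed form $M_n=4(2n-5)/(n(n-1))$ is valid only for $n\ge 4$ (for instance $M_3=1\neq \tfrac23$), which is why the remaining values of $n$ must be handled by the citation to \cite{EF}.
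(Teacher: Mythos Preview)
Your proposal is correct and follows essentially the same approach as the paper: handle $n\le 4$ by citing \cite{EF}, and for $n\ge 4$ obtain the lower bound $\hat r_{_\infty}(K_n)\ge M_n$ directly from the inequality $\hat R(K_n,tK_2)\ge 2t(2n-5)$ of \pref{lem:lower}. The only cosmetic difference is that for the upper bound the paper re-derives $\hat r_{_\infty}(K_n)\le M_n$ from the explicit upper bound in \pref{lem:lower} rather than simply citing \eqref{eq:Mn} as you do; both routes are immediate.
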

\begin{proof}
The claim was proved in \cite{EF} for $ n=1,2,3,4 $. Now, assume that $ n\geq 4 $.
By \pref{lem:lower}, we have
\[ \hat{r}_{_\infty}(K_n) \leq \lim_{t\to \infty} \dfrac{(2t+n-2)(2n-5)}{t\binom{n}{2}}= \dfrac{4(2n-5)}{n(n-1)}= M_n.  \]
On the other hand, again by \pref{lem:lower}, we have
$$ \frac{\hat{R}(K_n,tK_2)}{t\, \hat{R}(K_n,K_2)}\geq \frac{t(4n-10)}{t\binom{n}{2}}= M_n.$$
Hence, $ \hat{r}_{_\infty}(K_n) \geq M_n $.
%If $ H\to (K_n,tK_2) $ and $ H $ is minimal, then trivially $ \delta(H)\geq n-1 $ and $ |E(H)|\geq 2t(n-1)  $. Thus, $ \hat{r}(K_n,tK_2) \geq 2t(n-1)$. It remains to prove a better lower bound (for $ n=5 $ we have to prove that $ \hat{r}(K_5,tK_2)\geq 10t+o(t) $). \\
\end{proof}

\section{Concluding remarks and an open problem}
In this section, we close the paper with some supplementary remarks. First, we introduce the same problem for hypergraphs and we give an open problem in this direction. The Ramsey numbers and the size Ramsey numbers of hypergraphs are defined in a similar way as for graphs. Assume that $K_n^r$ is a complete $r$-uniform hypergraph on $n$ vertices and
$tK_r^r$ is an $r$-uniform matching of size $t$. By a straightforward argument, it turns out that
\begin{equation} \label{hypergraph}
R(K_n^r,tK_r^r)=n+(t-1)r.
\end{equation}
To see the inequality $R(K_n^r,tK_r^r)\geq n+(t-1)r$, partition $n+(t-1)r-1$ vertices into two sets $A$ and $B$ of sizes $n-r$ and $tr-1$, respectively and color all edges (all $r$-subsets) with non-empty intersection with $A$ by red and all edges which are included in $B$ by blue. Clearly this yields a 2-edge coloring of a complete $r$-uniform hypergraph on $n+(t-1)r-1$ vertices with no red copy of $K_n^r$ and no blue copy of $tK_r^r$. Now in order to show that $R(K_n^r,tK_r^r)\leq n+(t-1)r$, consider a 2-edge colored complete $r$-uniform hypergraph $H$ on $n+(t-1)r$ vertices and let $M$ be the maximum blue matching in this hypergraph. If $|M|\geq t$, then we are done. Otherwise,  $|M|\leq t-1$ and the hypergraph induced on the vertices $V(H)-V(M)$ is a monochromatic complete $r$-uniform hypergraph of color red on at least $n$ vertices and so $ H $ contains a red copy of $ K_n^r $. An argument similar to what we used in Section 2 and \eqref{hypergraph} ensure the following inequality for the size Ramsey number $\hat{R}(K_n^r,tK_r^r)$.

$$\hat{R}(K_n^r,tK_r^r)\leq g_r(n,t)=\min \left\{ \sum_{i=1}^l \binom{n+r(s_i-1)}{r} : s_i\in \mathbb{Z}^+, s_1+\cdots+s_l\geq t \right\}.$$

It was established in this paper that the equality holds for $r=2$. This gives rise to the interesting and challenging question that whether $\hat{R}(K_n^r,tK_r^r)=g_r(n,t)$ for every $r\geq 2$. We believe that our method  can be applied to attack the problem, however much efforts and  details are  definitely needed.

In the other point of view, one may think of the generalization of \pref{thm:main} to the case when there are more than two colors. 
For given graphs $G_1,\ldots, G_f$, the {\it chromatic Ramsey number}, denoted by $R_c(G_1,\ldots, G_f)$,  is the least positive integer $n$ such that there exists a graph $G$ of the chromatic number $n$ for which we have $ G\rightarrow (G_1,\ldots, G_f)$. Chromatic Ramsey numbers were introduced in 1976 by Burr, Erd\H{o}s and Lov$\acute{a}$sz \cite{BEL}. Using Lemma~\ref{lem:chi} we can find a lower bound for the size Ramsey number of graphs in terms of the chromatic Ramsey numbers. In fact, we have the following interesting inequality.
$$\hat{R}(G_1,\ldots, G_f,H_1,\ldots, H_l)\geq \hat{g}(n,t),$$
where $n=R_c(G_1,\ldots, G_f)$ and $t=\lfloor\frac{R(H_1,\ldots, H_l)}{2}\rfloor$. To see the inequality, assume to the contrary that there is a graph $G$ with $|E(G)|< \hat{g}(n,t)$ such that $G\rightarrow (G_1,\ldots, G_f,H_1,\ldots, H_l)$. Using Lemma~\ref{lem:chi}(i), there exists a subset $S\subseteq V(G) $ such that $|S|\leq 2t-1$ and $\chi(G-S)\leq n-2 $. Suppose that $G'$ is a graph obtained from $G$ by removing  all  edges  contained in $S$. Clearly, $\chi(G')\leq \chi(G-S)+1\leq n-1$ and so there is an $f$-coloring of the edges of $G'$ by colors $1,2,\ldots,f$ such that $ G' $ contains no copy of $G_i$ of color $ i $, for every $1\leq i\leq f$. On the other hand, $|S|\leq 2t-1$ and $2t\leq R(H_1,\ldots, H_l)$, so there is an $l$-coloring of the edges within $S$ by colors $f+1,\ldots,f+l$ such that $ G $ contains no copy of $H_i$ of color $ f+i $ for each $1\leq i\leq l$. Combining these two edge colorings yields an edge coloring for $G$ with $f+l$ colors $1,\ldots,f+l$, where the subgraph
induced by the edges of color $i$ (resp. $f+i$) does not contain $G_i$ (resp. $H_i$) as a subgraph for each $1\leq i\leq f$ (resp. $1\leq i\leq l$). This observation contradicts the fact $G\rightarrow (G_1,\ldots, G_f,H_1,\ldots, H_l)$. With a similar argument we have
$$\hat{R}(G_1,\ldots, G_f,H_1,\ldots, H_l)\geq g(n,t),$$
where $n=R_c(G_1,\ldots, G_f)\geq 4$ and $t=\lfloor\frac{R(H_1,\ldots, H_l)-1}{2}\rfloor$.


\begin{thebibliography}{99}

\bibitem{Boundy}
J.A. Bondy, U.S.R. Murty, \textit{Graph theory}, Graduate Texts in Mathematics, Springer, New York, 2008.

\bibitem{BEL}
S.A. Burr, P. Erd\H{o}s, L. Lov$\acute{a}$sz, On graphs of Ramsey type, {\it Ars Combinatoria} 1 (1976), 167--190.


\bibitem{CFS} D. Conlon, J. Fox, B. Sudakov, Recent developments in graph Ramsey theory, in: Surveys in Combinatorics 2015, {\it Cambridge University press}, 2015, 49--118.

\bibitem{EF} P. Erd\H{o}s, R.J. Faudree, The size Rmasey number involving matchings, {\it Colloquia Mathematica Societatis Janos Bolyai} 37 (1981), 247--264.

\bibitem{FS} R. Faudree and R. Schelp, A survey of results on the size Ramsey number, Paul Erd\H{o}s and his mathematics, II (Budapest, 1999), {\it  Bolyai Soc. Math. Stud.}, {\bf 11}, {\it Janos Bolyai Math. Soc.}, Budapest, 2002, 291--309.

\bibitem{FS-matching}
	R.J. Faudree, R.H. Schelp, J. Sheehan, Ramsey numbers for matchings, {\it Discrete Math.} 32 (1980), 105--123.

\bibitem{L}
P.J. Lorimer,  The Ramsey numbers for stripes and one complete graph,  {\it J. Graph Theory} 8 (1984), 177--184.


\bibitem{RS}
S.P. Radziszowski, Small Ramsey numbers, {\it Electron. J. Combin.} (2014), DS1.



%\bibitem{7} O. Pikhurko, Size Ramsey numbers of stars versus 3-chromatic graphs, \textit{Combinatorica} 21 (2001), no. 3, 403--412.

%\bibitem{8} O. Pikhurko, Size Ramsey numbers of stars versus 4-chromatic graphs, \textit{J. Graph Theory} 42 (2003), no. 3, 220--233.





\end{thebibliography}
\end{document}